\numberwithin{equation}{section}
\numberwithin{figure}{section}
\theoremstyle{plain}
\newtheorem{thm}{\protect\theoremname}
  \theoremstyle{plain}
  \newtheorem{lem}[thm]{\protect\lemmaname}
  \providecommand{\lemmaname}{Lemma}
\providecommand{\theoremname}{Theorem}
\newtheorem{theorem}{Theorem}
\newtheorem{corollary}[theorem]{Corollary}
\newtheorem{definition}[theorem]{Definition}
\let\pdfoutput=\undefined\fi
\begin{document}

\title[A surjection theorem]{A surjection theorem for maps with singular perturbation and loss of derivatives}

\author[I. Ekeland]{Ivar Ekeland}
\address{CEREMADE, Universit\'e Paris-Dauphine, PSL Research University, CNRS, UMR 7534, Place de Lattre de Tassigny, F-75016 Paris, France} 
\email{ekeland@ceremade.dauphine.fr}

\author[\'E. S\'er\'e]{\'Eric S\'er\'e}
\address{CEREMADE, Universit\'e Paris-Dauphine, PSL Research University, CNRS, UMR 7534, Place de Lattre de Tassigny, F-75016 Paris, France} 
\email{sere@ceremade.dauphine.fr}

\date{May 27, 2021}

\begin{abstract}
In this paper we introduce a new algorithm for solving perturbed nonlinear functional equations which admit
a right-invertible linearization, but with an inverse that loses derivatives and may blow up when the perturbation parameter $\varepsilon$ goes to zero. These equations are of the form $F_\varepsilon(u)=v$ with $F_\varepsilon(0)=0$, $v$ small and given, $u$ small and unknown. The main difference with the by now
classical Nash-Moser algorithm is that, instead of using a regularized Newton scheme, we solve a sequence of Galerkin problems thanks to a topological argument. As a consequence, in our estimates there are \textit{no quadratic terms}.  For problems without perturbation parameter, our results require weaker regularity assumptions on $F$ and $v$ than earlier ones, such as those of H\"{o}rmander \cite{Hormander}. For singularly perturbed functionals $F_\varepsilon$, we allow $v$ to be larger than in previous works. To illustrate this, we apply our method to a nonlinear Schr\"{o}dinger Cauchy problem with concentrated initial data studied by Texier-Zumbrun \cite{TZ}, and we show that our result improves significantly on theirs.\end{abstract}
\maketitle

\bigskip

\bigskip

\section{Introduction}

The basic idea of the inverse function theorem (henceforth IFT) is that, if a
map $F$ is differentiable at a point $u_{0}$ and the derivative $DF\left(
u_{0}\right)  $ is invertible, then the map itself is invertible in some
neighbourhood of $u_{0}$. It has a long and distinguished history (see
\cite{BB} for instance), going back to the inversion of power series in the
seventeenth century, and has been extended since to maps between
infinite-dimensional spaces. If the underlying space is Banach, and if one is only interested
in the local surjectivity of $F$, that is, the existence, near $u_0$, of a solution $u$ to the equation $F(u)=v$ for $v$ close to $F(u_0)$,
one just needs to assume that $F$ is of class $C^1$ and that $DF(u_0)$
has a right-inverse $L(u_0)$. The standard proof is based
on the Picard scheme:
$$u_{n}=u_{n-1}-L(u_0)(F(u_{n-1})-v)$$
which converges geometrically to a solution of $F(u)=v$ provided $\Vert F(u_0)-v\Vert$
is small enough. In the $C^2$ case, the Newton algorithm:
$$u_{n}=u_{n-1}-L(u_{n-1})(F(u_{n-1})-v)$$
uses the right-invertibility of $DF(u)$ for $u$ close to $u_0$, and provides local quadratic convergence.
\medskip

In functional analysis, $u$ will typically be a function. In many situations the IFT on Banach spaces will be enough, but in the study of Hamiltonian systems
and PDEs, one encounters cases when the right-inverse $L(u)$ of $DF(u)$
loses derivatives, i.e. when $L(u) F(w)$ has less derivatives than $u$ and $w$. In such a case, the Picard and Newton schemes lose derivatives at each step.
The first solutions to this problem are due, on the one hand, to Kolmogorov \cite{Kolmogorov} and Arnol'd \cite{Arnold},
\cite{Arnold2}, \cite{Arnold3}
who investigated perturbations of completely integrable Hamiltonian systems
in the analytic class,
and showed that invariant tori persist under small perturbations, and, on the other hand, to Nash
\cite{Nash}, who showed that any smooth compact Riemannian manifold can be imbedded
isometrically into an Euclidian space of sufficiently high dimension\footnote{Nash's\ theorem on isometric embeddings was later re-proved by Gunther
\cite{Gunther}, who found a different formulation of the problem and was able to use the classical IFT in Banach spaces.}.

In both cases, the fast convergence of Newton's scheme was used to overcome the loss of regularity.
Since Nash was considering functions with finitely many derivatives, he had to introduce a sequence of smoothing operators $\mathcal{S}_n$, in order to regularize $L(u_{n-1})(F(u_{n-1})-v)$, and the new scheme was
$$u_n=u_{n-1}-\mathcal{S}_n L(u_{n-1})(F(u_{n-1})-v)\,.$$

An early presentation of Nash's method can be found in Schwartz' notes \cite{Schwartz}. It was further improved by Moser \cite{Moser}, who used it to extend the
Kolmogorov-Arnol'd results to $C^{k}$ Hamiltonians. The Nash-Moser method has been the source of a considerable amount of work in
many different situations, giving rise in each case to a so-called "hard" IFT. We will not attempt to review this line of work in the present
paper. A survey up to 1982 will be found in \cite{Hamilton}. In \cite{Hormander}, H\"{o}rmander introduced a refined version of the Nash-Moser scheme which provides
the best estimates to date on the regularity loss. We refer to \cite{Alinhac} for a pedagogical account of this work, and to \cite{BH} for recent improvements. We also gained much insight into the Nash-Moser scheme from the papers \cite{BB1}, \cite{BB2}, \cite{BB0},
\cite{BBP}, \cite{TZ}.

The question we want to address here is the following.\ The IFT\ implies that the range of $F$ contains a neighborhood $\mathcal V$ of $v_0=F(u_0)$. What is the size of $\mathcal V$? 

In general, when one tries to apply directly the abstract Nash-Moser theorem, the estimates which can be derived from its proof are unreasonably
small, many orders of magnitude away from what can be observed in numerical simulations or physical experiments.
Moreover, precise estimates for the Nash-Moser method are difficult to
compute, and most theoretical papers simply do not address the question.

So we shall address instead a ''hard'' singular perturbation problem with loss of derivatives. The same issue appears in such problems, as we shall explain in a moment, but it takes a simpler form: one tries to find a good estimate on the size of $\mathcal V$ as a power of the perturbation parameter $\varepsilon$. Such an asymptotic analysis has been carefully done in the paper of Texier and Zumbrun \cite{TZ} which has been an important source of inspiration to us, and we will be able to compare our results with theirs. As noted by these authors, the use of Newton's scheme implies an intrinsic limit to the size of $\mathcal{V}$.

Let us explain this in the ``soft'' case, without loss of derivatives. Suppose that for every $0<\varepsilon\leq 1$ we have a $C^2$ map $F_
\varepsilon$ between two Banach spaces $X$ and $Y$, such that $F_\varepsilon(0)  =0$, and, for all
$\left\Vert
u\right\Vert \leq R$,
\begin{align*}
|||\, D_{u}F_\varepsilon(u)  ^{-1}|||  &
\leq\varepsilon^{-1}M\\
|||\, D_{uu}^{2}F_\varepsilon(u) \, |||  &  \leq K
\end{align*}
Then the Newton-Kantorovich Theorem (see \cite{Ciarlet}, section 7.7 for a
comprehensive discussion) tells us that the solution $u_{\varepsilon}$ of
$F_\varepsilon(u)  =v$ exists for $\left\Vert v\right\Vert
<\frac{\varepsilon^{2}}{2KM^{2}}$, and this is essentially the best result one can hope for
using Newton's algorithm, as mentioned by Texier and Zumbrun in \cite{TZ}, Remark 2.22.
Note that the use of a Picard iteration would give a similar condition.

However, in this simple situation where no derivatives are lost, it is possible, using topological arguments instead of Newton's method,
to find a solution $u$ provided
$\left\Vert v\right\Vert \leq\varepsilon R/M$: one order of magnitude in $\varepsilon$ has
been gained. The first result of this kind, when $F$ is $C^1$ and dim$\,X\,=\,$dim$\,Y\,<\,\infty\,$, is due to Wazewski \cite{W} who used a continuation method. See also \cite{John} and \cite{Soto} and the references in these papers, for more general results in this direction.
In \cite{IE3} (Theorem 2), using Ekeland's variational principle, Wazewski's result is proved in Banach spaces, assuming only that $F$ is continuous and G\^ateaux differentiable,
the differential having a uniformly bounded right-inverse (in \S 2 below, we recall this result, as Theorem \ref{thm1}).

Our goal is to extend such a topological approach to ``hard'' problems with loss of derivatives,
which up to now have been tackled by the Nash-Moser algorithm. A first
attempt in this direction was made in \cite{IE3} (Theorem 1), in the case when the estimates on the
right-inverse do not depend on the base point,
but it is very hard to find examples of such situations. The present paper fulfills the program in the general case, where estimates on
the inverse depend on the base point.

In \cite{BBP}, Berti, Bolle and Procesi prove a new version of the Nash-Moser theorem by solving a sequence of Galerkin problems $\Pi'_nF(u_n)=\Pi'_n v$, $u_n\in E_n$, where $\Pi_n$ and $\Pi'_n$ are projectors and $E_n$ is the range of $\Pi_n$. They find the solution of each projected equation thanks to a Picard iteration:
$$u_n=\lim_{k\to\infty} w^k \ \hbox{ with } \ w^0=u_{n-1} \ \hbox{ and }
\ w^{k+1}=w^k-L_{n}(u_{n-1})(F(w^k)-v)\,,$$
where $L_n(u_{n-1})$ is a right inverse of $D(\Pi'_nF_{\,\vert_{E_n}})(u_{n-1})$. So, in \cite{BBP} the regularized Newton step is not really absent: it is essentially the first step in each Picard iteration. As a consequence, the proof in \cite{BBP} involves quadratic estimates similar to the ones of more standard Nash-Moser schemes.
Moreover, Berti, Bolle and Procesi assume the right-invertibility of $D(\Pi'_nF_{\,\vert_{E_n}})(u_{n-1})$. This assumption is perfectly suitable for the applications they consider (periodic solutions of a nonlinear wave equation), but in general it is not a consequence of the right-invertibility of $DF(u_{n-1})$, and this restricts the generality of their method as compared with the standard Nash-Moser scheme.

As in \cite{BBP}, we work with projectors and solve a sequence of Galerkin problems. But in contrast with \cite{BBP}, the Newton steps are completely absent in our new algorithm, they are replaced by the topological
argument from  \cite{IE3} (Theorem 2), ensuring the solvability of each projected equation. Incidentally, this allows us to work with functionals $F$ that are only continuous and G\^ateaux-differentiable, while the standard Nash-Moser scheme requires twice-differentiable functionals. Our regularity assumption on $v$ also seems to be optimal, and even weaker than in \cite{Hormander}. Moreover, our method works assuming either the right-invertibility of $D(\Pi'_nF_{\,\vert_{E_n}})(u)$ as in \cite{BBP}, or the right-invertibility of $DF(u)$ (in the second case, our proof is more complicated). But in our opinion, the main advantage of our approach is the following: there are \textit{no more quadratic terms} in our estimates, as a consequence we can deal with larger $v$'s, and this advantage is particularly obvious in the case of singular perturbations.

To illustrate this, we will give an abstract existence theorem with a precise estimate of the range of $F$ for
a singular perturbation problem: this is Theorem 3 below. Comparing our result with the abstract theorem of \cite{TZ}, one can see that we have weaker assumptions and a stronger conclusion.  Then we will apply Theorem 3 to an example given in \cite{TZ}, namely a Cauchy problem for a quasilinear Schr\"{o}dinger system first studied by M\'{e}tivier
and Rauch \cite{MR}. Texier and Zumbrun use their abstract Nash-Moser theorem to prove the existence of solutions of this system on a fixed time interval, for concentrated initial data.
Our abstract theorem allows us to increase the order of magnitude of the oscillation in the initial data. After reading our paper, Baldi and Haus \cite{BHperso} have been able to increase even more this order of magnitude, using their own version \cite{BH} of the Newton scheme for Nash-Moser, combined with a clever modification of the norms considered in \cite{TZ} and an improved estimate on the second derivative of the functional. In contrast, our proof follows directly from our abstract theorem, taking exactly the same norms and estimates as in \cite{TZ}, and without even considering the second derivative of the functional.

The paper is constructed as follows. In Section 2, we present the
general framework: we are trying to solve the equation $F_\varepsilon (u)
=v$ near $F_\varepsilon (0) = 0 $, when $F_\varepsilon$ maps a scale of Banach spaces of functions into another and
admits a right-invertible G\^ateaux differential
with ``tame estimates" involving losses of
derivatives and negative powers of $\varepsilon$. After giving our precise assumptions, we state
our main theorem. Section 3 is devoted to its proof. In Section 4,
we apply it to the example taken from Texier and Zumbrun \cite{TZ}, and we compare our results with theirs.\medskip

{\bf Acknowledgement.} We are grateful to Massimiliano Berti, Philippe Bolle, Jacques Fejoz and Louis Nirenberg for their interest in our work and their encouragements. It is a pleasure to thank Pietro Baldi for stimulating discussions in Naples and Paris, for a careful reading of the present paper and for a number of suggestions. We also thank the referees, whose remarks have helped us to improve this manuscript.

\bigskip

\bigskip

\section{Main assumptions and results.}

\subsection{Two tame scales of Banach spaces}

Let $(V_{s},\,\Vert\cdot\Vert_{s})_{0\leq s \leq S}$ be a scale of
Banach spaces, namely:
\[
0\leq s_{1}\leq s_{2}\leq S\Longrightarrow\left[  V_{s_{2}}\subset
V_{s_{1}}\text{ \ and\ \ }\Vert\cdot\Vert_{s_{1}}\leq\Vert\cdot\Vert_{s_{2}%
}\right]\;.
\]

We shall assume that to each $\Lambda\in [1,\infty)$ is associated a continuous linear projection $\Pi(\Lambda)$ on
$V_0$, with range $E(\Lambda)\subset V_S$. We shall also assume that the spaces $E(\Lambda)$ form a nondecreasing family of sets indexed by $[1,\infty)$, while the spaces $Ker\,\Pi(\Lambda)$ form a nonincreasing family. In other words:

\[1\leq \Lambda\leq \Lambda'\,\Longrightarrow \,\Pi(\Lambda)\Pi(\Lambda')=\Pi(\Lambda')\Pi(\Lambda)=\Pi(\Lambda)\;.\]
Finally, we assume that the projections $\Pi(\Lambda)$ are ``smoothing operators" satisfying the following estimates:
\medskip

\textbf{Polynomial growth and approximation}: \textit{There are constants }$A_{1},\ A_{2}\geq
1$\textit{ such that, for all numbers} $0\leq s\leq S$\textit{, all }$\Lambda\in [1,\infty)$\textit{ and all }$u\in V_{s}\,$\textit{, we have:}%
\begin{align}
\forall t\in [0,S]\,,\;\;\Vert\Pi(\Lambda)u\Vert_{t}  &  \leq A_{1}\,\Lambda^{(t-s)^{+}}\Vert u\Vert_{s}%
\label{loss}\\
\forall t\in [0,s]\,,\;\;\Vert(1-\Pi({\Lambda}))u\Vert_{t}  &  \leq A_{2}\,\Lambda^{-(s-t)}\Vert u\Vert_{s}
\label{gain}%
\end{align}

When the above properties are met, we shall say that $(V_{s}\,,\,\Vert\cdot\Vert_{s})_{0\leq s \leq S}$
endowed with the family of projectors $\left\{\,\Pi(\Lambda)\;,\;\Lambda\in [1,\infty)\,\right\}\,, $ is a {\it tame} Banach scale.
\medskip

It is well-known (see e.g. \cite{BBP}) that (\ref{loss},\ref{gain}) imply: \medskip

\textbf{Interpolation inequality}:\textit{ For }$0\leq t_{1}%
\,\leq\,s\,\leq\,t_{2}\leq S$\textit{ ,}%
\begin{equation}
\Vert u\Vert_{s}\leq A_{3}\Vert u\Vert_{t_{1}}^{\frac{t_{2}-s}{t_{2}-t_{1}}%
}\Vert u\Vert_{t_{2}}^{\frac{s-t_{1}}{t_{2}-t_{1}}}\;. \label{inter}%
\end{equation}

Let $(W_{s}\,,\,\Vert\cdot\Vert'_s)_{0\leq s\leq S}$ be another tame scale of Banach spaces. We shall denote by $\Pi^{\prime}(\Lambda)$ the corresponding projections defined on $W_0$ with ranges $E'(\Lambda)\subset W_S$, and by $A'_{i}\;(i=1,2,3)$ the
corresponding constants in (\ref{loss}), (\ref{gain}) and (\ref{inter}).\medskip

{\bf Remark.} In many practical situations, the projectors form a discrete family as, for instance, $\{\Pi(N)\,,\;N\in\mathbb{N}^*\}$, or $\{\Pi({2^j})\,,\;j\in\mathbb{N}\}$. The first case occurs when $\Pi(N)$ acts
on periodic functions by truncating their Fourier series, keeping only frequencies of size less or equal to $N$,
as in \cite{BBP}.
The second case occurs when truncating orthogonal wavelet expansions as in an earlier version of the present work \cite{ESDebut}. Our choice of notation and assumptions covers these
cases, taking $\Pi(\Lambda)=\Pi(\left\lfloor \Lambda\right\rfloor)$ or $\Pi(\Lambda)=\Pi(2^{\left\lfloor \log_2(\Lambda)\right\rfloor})$, where $\left\lfloor \cdot\right\rfloor$ denotes the integer part.\medskip

\subsection{Main theorem}

We state our result in the framework of singular perturbations, in the spirit
of Texier and Zumbrun \cite{TZ}. The norms $\Vert\cdot\Vert_s\,,\,\Vert\cdot\Vert_s'$ on the tame scales $(V_{s})$, $(W_{s})$ may depend on the perturbation parameter $\varepsilon
\in(0,1]$, as well as the projectors $\Pi(\Lambda)\,,\,\Pi'(\Lambda)\,$ and their ranges $E(\Lambda)$, $E'(\Lambda)\,.$ But we impose that
$S$ and the constants $A_i,\,A'_i$ appearing in estimates (\ref{loss},
\ref{gain}, \ref{inter}) be independent of $\varepsilon$. In order to avoid
burdensome notations, the dependence of the norms, projectors and subspaces on
$\varepsilon$ will not be explicit in the sequel.\bigskip

Denote by $B_{s}$ the unit ball in
$V_{s}$:%
\[
B_{s}=\left\{  u\ |\ \left\Vert u\right\Vert _{s}\leq1\right\}
\]

In the sequel we fix nonnegative constants $s_0, m,\ell,\,\ell'$ and $g$, independent of $\varepsilon$. We will assume that $S$ is large enough.\bigskip

We first recall the definition of G\^ateaux-differentiability, in a form adapted to our framework:\medskip

\begin{definition}
We shall say that a function $F:\,B_{s_{0}+m}\rightarrow
W_{s_{0}}$ is \emph{G\^{a}teaux-differentiable} (henceforth
G-differentiable) if for every $u\in
B_{s_{0}+m}$, there exists a linear map $DF\left(
u\right)  :V_{s_0+m}\rightarrow W_{s_0}$ such that for every
$s\in [s_0,S-m]$, if $u\in
B_{s_{0}+m}\cap V_{s+m}$,  then $DF \left(
u\right)$ maps continuously $V_{s+m}$ into $W_s$, and %
\[
\forall h\in V_{s+m}\ ,\ \lim_{t\rightarrow0}\ \left\Vert \frac{1}{t}\left[
F\left(  u+th\right)  -F\left(  u\right)  \right]  -D F\left(  u\right)
h\right\Vert _{s}^{\prime}=0\;.
\]

\end{definition}

\medskip
Note that, even in finite dimension, a G-differentiable map need not be $C^{1}$, or
even continuous. However, if $D F:\,V_{s+m}\to\mathcal{L}(V_{s+m},W_s)$ is locally bounded, then $F:\,V_{s+m}\to W_s$ is locally Lipschitz, hence continuous. In the present paper, we will always be in such a situation.\bigskip

We now consider a family of maps $(F_\varepsilon)_{0<\varepsilon\leq 1}$ with $F_\varepsilon:\ B_{s_0+m} \to W_{s_0}$. We are ready to state our assumptions on this family:\bigskip

\begin{definition}
\leavevmode\par
\begin{itemize}

\item We shall say that the maps $F_\varepsilon:\,B_{s_{0}+m}\rightarrow
W_{s_{0}}\; (0<\varepsilon \leq 1)$ form an $S$-tame differentiable family if they are G-differentiable with respect to
$u$, and, for some positive constant $a\,,$ for all $\varepsilon\in (0, 1]$ and all $s \in [s_{0}, S-m]\,$, if $u\in B_{s_{0}+m}\cap V_{s+m}$ and $h\in V_{s+m}\,,$ then
$DF_{\varepsilon}\left(  u\right)  h \in W_s$ with the tame direct estimate
\begin{equation}
\left\Vert DF_{\varepsilon}\left(  u\right)  h\right\Vert _{s}^{\prime}\leq
a\left(  \left\Vert h\right\Vert _{s+m}+\left\Vert u\right\Vert _{s+m}
\left\Vert h\right\Vert _{s_{0}+m}\right)\;.  \label{tamedirectepsilon}
\end{equation}

\item Then we shall say that $(DF_\varepsilon)_{0<\varepsilon\leq 1}$ is tame right-invertible if there are $\,b>0$ and $g,\,\ell,\,\ell'\geq 0$ such that for all
$0<\varepsilon\leq 1$ and $u\in B_{s_{0}+\max\{m,\ell\}}$ , there is a linear map
$L_{\varepsilon}\left(  u\right)  :W_{s_0+\ell^{\prime}}\rightarrow V_{s_0}$
satisfying
\begin{equation}
\forall k\in W_{s_0+\ell'}\,,\ \ \ DF_{\varepsilon}\left(  u\right)
L_{\varepsilon}\left(  u\right)  k=k \label{rightinverse}%
\end{equation}
and for all $s_{0}\leq s\leq
S-\max\left\{  \ell,\ell^{\prime}\right\}  $, if $u\in B_{s_{0}+\max\{m,\ell\}}\cap V_{s+\ell}\,$ and $k\in W_{s+\ell^{\prime}}\,,$ then $L_{\varepsilon}\left(
u\right)  k\in V_s\,,$ with the tame inverse estimate
\begin{equation}
\left\Vert L_{\varepsilon}\left(
u\right)  k\right\Vert _{s}\leq b\varepsilon^{-g}\left(  \left\Vert
k\right\Vert _{s+\ell^{\prime}}^{\prime}+\left\Vert k\right\Vert _{s_{0}%
+\ell^{\prime}}^{\prime}\left\Vert u\right\Vert _{s+\ell}\right)\;.
\label{tameinverseepsilon}%
\end{equation}

\item Alternatively, we shall say that $(DF_\varepsilon)_{0<\varepsilon\leq 1}$ is tame {\it Galerkin} right-invertible if there are $\,\underline{\Lambda}\geq 1\,$, $b>0$ and $\,g,\,\ell,\,\ell'\geq 0$ such that for all $\Lambda\geq \underline{\Lambda}\,,\ 0<\varepsilon\leq 1$ and any $u\in B_{s_{0}+\max\{m,\ell\}}\cap E(\Lambda)$, there is a linear map
$L_{\Lambda,\varepsilon}\left(  u\right)  :E'(\Lambda)\rightarrow E(\Lambda)$
satisfying%
\begin{equation}
\forall k\in E'(\Lambda)\,,\ \ \ \Pi'(\Lambda) DF_{\varepsilon}\left(  u\right)
L_{\Lambda,\varepsilon}\left(  u\right)  k=k \label{galrightinverse}%
\end{equation}
and for all $s_{0}\leq s\leq
S-\max\left\{  \ell,\ell^{\prime}\right\}  $, we have the tame inverse estimate:
\begin{equation}
\forall k\in E'(\Lambda)\,, \ \left\Vert L_{\Lambda,\varepsilon}\left(
u\right)  k\right\Vert _{s}\leq b\varepsilon^{-g}\left(  \left\Vert
k\right\Vert _{s+\ell^{\prime}}^{\prime}+\left\Vert k\right\Vert _{s_{0}%
+\ell^{\prime}}^{\prime}\left\Vert u\right\Vert _{s+\ell}\right)\;.
\label{galtamegalinverseepsilon}%
\end{equation}

\end{itemize}
\end{definition}

In this definition, the integers $m, \ell, \ell^{\prime}$ denote the loss of derivatives for $DF_{\varepsilon}$ and its right-inverse, and $g>0 $ denotes the strength of the singularity at $\varepsilon = 0$. The unperturbed case of a fixed map can be recovered by setting $\varepsilon =1$.

We want to solve the equation 
$F_\varepsilon(u)=v$.
There are three things to look for. How regular is $v$ ? How regular is $u$, or, equivalently, how small is the loss of derivatives between $v$ and $u$ ? How does the existence domain depend on $\varepsilon$ ? The following result answers them in a near-optimal way.

\begin{theorem}
\label{Thm8}

Assume that the maps $F_\varepsilon$ $(0<\varepsilon\leq 1)$ form an $S$-tame differentiable family between the tame scales $(V_s)_{0\leq s\leq S}$ and $(W_s)_{0\leq s\leq S}$, with $F_\varepsilon(0)=0$ for all $0<\varepsilon\leq 1$. Assume, in addition, that $(DF_\varepsilon)_{0<\varepsilon\leq 1}$ is either tame right-invertible or tame Galerkin right-invertible. Let $s_0,\,m,\,g,\,\ell,\,\ell'$ be the associated parameters.

Let $s_1 \geq s_0+\max\{m,\ell\}$, $\delta>s_1+\ell'$ and $g'>g$.

Then, for $S$ large enough, there is $r>0$ such that, whenever $0<\varepsilon\leq 1$ and $\left\Vert v\right\Vert _{\delta}^{\prime}\leq r\varepsilon^{g^{\prime}}$,
there exists some $u_{\varepsilon}\in B_{s_1}$ satisfying:
\begin{align*}
&F_\varepsilon(u_{\varepsilon})=v\ \\
&\left\Vert u_{\varepsilon}\right\Vert _{s_1}\leq r^{-1}\,\varepsilon^{-g^{\prime}
}\left\Vert v\right\Vert _{\delta}^{\prime}
\end{align*}
\end{theorem}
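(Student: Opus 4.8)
The plan is to solve the equation by an iterative Galerkin scheme, following the philosophy announced in the introduction: replace the regularized Newton step by the topological surjection theorem (Theorem \ref{thm1}) applied to each finite-dimensional projected problem. Fix a geometrically increasing sequence of truncation parameters $\Lambda_n=\underline{\Lambda}\,\kappa^n$ for a suitable $\kappa>1$, and write $\Pi_n=\Pi(\Lambda_n)$, $\Pi'_n=\Pi'(\Lambda_n)$, $E_n=E(\Lambda_n)$, $E'_n=E'(\Lambda_n)$. At step $n$ I would look for $u_n\in E_n$ solving the projected equation $\Pi'_n F_\varepsilon(u_n)=\Pi'_n v$. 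To apply the topological Theorem \ref{thm1} on the finite-dimensional ball $\{u\in E_n:\|u-u_{n-1}\|_{s_0}\le \rho_n\}$, I need a uniformly bounded right-inverse of $u\mapsto \Pi'_nDF_\varepsilon(u)\big|_{E_n}$ near $u_{n-1}$: in the tame Galerkin right-invertible case this is $L_{\Lambda_n,\varepsilon}(u)$ directly; in the tame right-invertible case one has to build it from $L_\varepsilon(u)$ by the usual Nash–Moser correction $\Pi_n L_\varepsilon(u)$ composed with a Neumann-series inversion of $\Pi'_n DF_\varepsilon(u)\Pi_n L_\varepsilon(u)$, which is the source of the ``more complicated proof'' mentioned in the introduction — one checks the operator $\Pi'_n DF_\varepsilon(u)(1-\Pi_n)L_\varepsilon(u)$ is small using the smoothing estimates (\ref{loss}), (\ref{gain}) together with (\ref{tamedirectepsilon}), (\ref{tameinverseepsilon}).

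The heart of the matter is the bookkeeping of norms. I would carry along two quantities: a \emph{low-norm} bound $\|u_n\|_{s_1}\le$ (something like a constant times $\varepsilon^{-g'}\|v\|'_\delta$) and a \emph{high-norm} bound $\|u_n\|_{S_*}\le$ (a slowly growing function of $\Lambda_n$), where $S_*<S-\max\{\ell,\ell'\}$ is an auxiliary high index. The increment $w_n:=u_n-u_{n-1}\in E_n$ is estimated by applying the right-inverse to the residual $\Pi'_n(v-F_\varepsilon(u_{n-1}))$. The crucial point, and the whole reason the topological approach beats Newton here, is that the residual $v-F_\varepsilon(u_{n-1})$ need only be controlled \emph{linearly}: writing $F_\varepsilon(u_{n-1})=F_\varepsilon(u_{n-1})-\Pi'_{n-1}F_\varepsilon(u_{n-1})+\Pi'_{n-1}v$ and using that $\Pi'_{n-1}F_\varepsilon(u_{n-1})=\Pi'_{n-1}v$ by the previous step, the residual splits into $(1-\Pi'_{n-1})v$, controlled by (\ref{gain}) and the hypothesis $\|v\|'_\delta\le r\varepsilon^{g'}$, and $(1-\Pi'_{n-1})F_\varepsilon(u_{n-1})$, controlled by (\ref{gain}) applied at a high index together with the tame direct estimate (\ref{tamedirectepsilon}) and $F_\varepsilon(0)=0$ written as $F_\varepsilon(u_{n-1})=\int_0^1 DF_\varepsilon(tu_{n-1})u_{n-1}\,dt$. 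There are \emph{no quadratic terms}: the interpolation inequality (\ref{inter}) is used to convert the high-norm growth into a decaying factor $\Lambda_n^{-(\delta-s_1-\ell')}$ (this is where the gap conditions $\delta>s_1+\ell'$, $s_1\ge s_0+\max\{m,\ell\}$, and ``$S$ large enough'' enter), making $\sum_n\|w_n\|_{s_1}$ a convergent geometric-type series provided $r$ is chosen small and $\kappa$ appropriately.

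Once the two-parameter induction closes — i.e. one verifies that the chosen $\rho_n$ keeps each iterate in the ball where the right-inverse exists, that $\|u_n\|_{s_0+\max\{m,\ell\}}$ stays below the radius required by the invertibility hypothesis, and that the low-norm bound is propagated with the constant slightly better than the target so the limit satisfies it — one concludes that $(u_n)$ is Cauchy in $V_{s_1}$, hence converges to some $u_\varepsilon\in B_{s_1}$. Passing to the limit in $\Pi'_n F_\varepsilon(u_n)=\Pi'_n v$ uses continuity of $F_\varepsilon$ from $V_{s_1}$ to $W_{s_0}$ (which follows from local boundedness of $DF_\varepsilon$, as noted after the definition of G-differentiability) and the approximation property $\Pi'_n\to \mathrm{Id}$ from (\ref{gain}); one gets $F_\varepsilon(u_\varepsilon)=v$, and the norm bound $\|u_\varepsilon\|_{s_1}\le r^{-1}\varepsilon^{-g'}\|v\|'_\delta$ is read off from the summed low-norm estimate by absorbing all constants into the definition of $r$. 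The main obstacle I anticipate is purely technical: choosing the free parameters ($\kappa$, the auxiliary high index $S_*$, the sequence $\rho_n$, the threshold $r$, and the lower bound needed on $S$) in a mutually consistent way so that all the geometric series converge and all ``stay-in-the-ball'' constraints are met simultaneously — in particular handling the $\varepsilon$-powers, since each application of the inverse costs a factor $\varepsilon^{-g}$ and one must ensure these do not accumulate beyond the allotted $\varepsilon^{-g'}$, which works precisely because $g'>g$ leaves room for a convergent geometric correction in the number of steps.
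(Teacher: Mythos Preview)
Your overall architecture is right and matches the paper: solve a sequence of projected equations by the topological surjection theorem (Theorem~\ref{thm1}), carry a low-norm and a high-norm bound through the induction, and pass to the limit. In the tame \emph{Galerkin} right-invertible case your sketch is essentially what the paper does.

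There is, however, a genuine gap in the tame right-invertible (non-Galerkin) case. You take $\Pi'_n=\Pi'(\Lambda_n)$ with the \emph{same} $\Lambda_n$ as $\Pi_n$, and claim that the Neumann correction works because $\Pi'_n DF_\varepsilon(u)(1-\Pi_n)L_\varepsilon(u)$ is small. It is not. Run the estimate: for $k\in E'_n$ set $h=(1-\Pi_n)L_\varepsilon(u)k$. Then $\|h\|_{s_0+m}\lesssim\Lambda_n^{-(\sigma-s_0-m)}\|L_\varepsilon(u)k\|_\sigma$, and by the tame inverse estimate $\|L_\varepsilon(u)k\|_\sigma\lesssim\varepsilon^{-g}\|k\|'_{\sigma+\ell'}$ (plus a term with $\|u\|_{\sigma+\ell}$). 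But $k\in E'(\Lambda_n)$, so $\|k\|'_{\sigma+\ell'}\lesssim\Lambda_n^{\sigma+\ell'-s_0}\|k\|'_{s_0}$, and the two powers of $\Lambda_n$ combine to $\Lambda_n^{m+\ell'}$, which \emph{grows}. There is no decay to feed the Neumann series. The paper's remedy, which it explains at the start of Section~3, is to use a \emph{coarser} output projector $\Pi'_n=\Pi'(M_n)$ with $M_n=\Lambda_n^{\vartheta}$, $\vartheta<1$ (but $\alpha\vartheta>1$ so that $M_n>\Lambda_{n-1}$). This rectangular truncation is exactly what produces the saving factor $\Lambda_n^{-(1-\vartheta)(\sigma-s_0)}$ and makes $\|\Pi'_nDF_\varepsilon(u)(1-\Pi_n)L_\varepsilon(u)\|\le\tfrac12$; see Lemma~\ref{lem:i1} and the analogous lemma in the induction step.

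Two further deviations from the paper are worth flagging, though they are more choices than errors. First, the paper takes the truncation scale to grow \emph{super-exponentially}, $\Lambda_n=\Lambda_1^{\alpha^{n-1}}$, not geometrically; several of the closing inequalities (e.g.\ (\ref{eq:i7}), (\ref{eq:i8})) are calibrated to this growth. Second, and more importantly for the $\varepsilon$-bookkeeping, the paper makes the \emph{starting} scale $\varepsilon$-dependent, $\Lambda_1=K\varepsilon^{-\eta}$ with a small $\eta>0$. This is how the single factor $\varepsilon^{-g}$ from the right-inverse is absorbed once and for all: the Neumann smallness and the residual estimates each pick up a power $\Lambda_1^{-C}$ with $C>g/\eta$, which beats $\varepsilon^{-g}$. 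Your last sentence (``these do not accumulate beyond the allotted $\varepsilon^{-g'}$ \ldots geometric correction in the number of steps'') suggests you expect the $\varepsilon^{-g}$ factors to pile up over the iteration and be tamed by summability; that is not how it works, and with a fixed $\underline{\Lambda}$ independent of $\varepsilon$ you would not get a uniform-in-$\varepsilon$ closing of the induction. Finally, a small technical point: the paper solves $\Pi'_nF_\varepsilon(u_n)=\Pi'_{n-1}v$ rather than $\Pi'_n v$, which makes the right-hand side of the incremental equation split cleanly into $(\Pi'_{n-1}-\Pi'_{n-2})v$ and a tail term $-( \Pi'_n-\Pi'_{n-1})F_\varepsilon(u_{n-1})$.
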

\bigskip

As we will see, the proof of Theorem \ref{Thm8} is much shorter under the assumptions that $DF_\varepsilon$ is Galerkin right-invertible. But in many applications, it is easier to check that $DF_\varepsilon$ is tame right-invertible
than tame Galerkin right-invertible. See \cite{BBP}, however, where an assumption similar to (\ref{galrightinverse}, \ref{galtamegalinverseepsilon}) is used.

All other ``hard" surjection theorems that we know of require some additional conditions on the second derivative of  $F_\varepsilon$. Here we do not need such assumptions, in fact we only assume $F_\varepsilon$ to be G-differentiable, not $C^2$. 

As for the three questions we raised, let us explain in which sense the answers are almost optimal in Theorem \ref{Thm8}. For the tame estimates \eqref{tamedirectepsilon},\eqref{tameinverseepsilon} to hold, one needs $u\in B_{s_1}$ with $s_1\geq s_0 +\max\{m,\ell\}$. When solving the linearized equation $DF_{\varepsilon}\left(  u\right) h=k$ in $V_{s_1}$ by $h =L_{\varepsilon}\left(  u\right) k$, one needs $k\in W_{s_1+\ell'}\,$, so it seems necessary to assume
$\delta\geq s_1+\ell'\,,$
and we find that the strict inequality is sufficient. Replacing $s_1$ with its minimal value, our condition on $\delta$ becomes 
\begin{equation*}
    \delta > s_0 +\max\{m,\ell\}+\ell'\,.
\end{equation*}
We have not found this condition in the literature: in \cite{Hormander} for instance, a stronger assumption is made, namely $\delta >s_0+\max\{2m+\ell',\ell\}+\ell'$.

For the dependence of $\Vert v\Vert'_\delta$ on $\varepsilon$, the constraint $g'>g$ also seems to be nearly optimal. Indeed, the solution $u_\varepsilon$ has to be in $B_{s_1}$, but the right-inverse $L_\varepsilon$ of $DF_{\varepsilon}$ has a norm of order $\varepsilon^{-g}$, so the condition $\Vert v\Vert'_\delta\lesssim \varepsilon^g$ seems necessary. We find that the condition $\Vert v\Vert'_\delta\lesssim \varepsilon^{g'}$ is sufficient.

Our condition on $S$ is of the form $S\geq S_0$ where $S_0$ depends only on the parameters $s_0,\,m,\,g,\,\ell,\,\ell'$ and $g',\,s_1\,,\delta$. Then $r$ depends only on these parameters and the constants $A_i$, $A'_i$ associated with the tame scales. In principle, all these constants could be made explicit, but we will not do it here. Let us just mention that one can take $S_0=\mathcal{O}\left(\frac{1}{g'-g}\right)$ as $g'\to g$, all other parameters remaining fixed. This follows from the inequality $\sigma<\zeta g/\eta$ in Lemma \ref{compatibility}.

\medskip

In the case of a tame right-invertible differential, we can restate our theorem in a form that allows direct comparison with \cite{TZ}: {\it Theorem 2.19 and Remarks 2.9, 2.14}. For this purpose, we consider two tame Banach scales $(V_s,\Vert\cdot\Vert_s)$ and $(W_s,\Vert\cdot\Vert'_s)$ with associated projectors $\Pi_\Lambda\,,\,\Pi'_\Lambda$, we take
$\gamma>0$ and we introduce the norms $\vert \cdot\vert_{s}:=\varepsilon^\gamma\Vert \cdot\Vert_{s}$ and $\vert \cdot\vert'_{s}:=\varepsilon^\gamma\Vert \cdot\Vert'_{s}\,$.
We then denote ${\mathfrak B}_{s}(\rho)=\left\{  u\ |\ \left\vert u\right\vert _{s}\leq \rho\right\}$ and we consider
functions $F_{\varepsilon}$ of the form $F_\varepsilon(u)=\Phi_\varepsilon({\mathfrak a}_\varepsilon+u)-\Phi_\varepsilon({\mathfrak a}_\varepsilon)\,,$ where $\Phi_\varepsilon$ is defined on ${\mathfrak B}_{s_0+m}(2\varepsilon^\gamma)\,$ and ${\mathfrak a}_\varepsilon\in {\mathfrak B}_{S}(\varepsilon^\gamma)\,$ is chosen such that $v_\varepsilon:=-\Phi_\varepsilon({\mathfrak a}_\varepsilon)$ is very small. A point $u$ in $B_{s_0+m}$ satisfies $F_\varepsilon(u)=v_\varepsilon\,$ if and only if it solves the equation $\Phi_\varepsilon({\mathfrak a}_\varepsilon+u)=0$ in ${\mathfrak B}_{s_0+m}(\varepsilon^\gamma)\,.$
We make the following assumptions on $\Phi_\varepsilon$:
\newpage

{\it For some $\gamma>0$ and any $0<\varepsilon \leq 1$, the map $\,\Phi_{\varepsilon}\,:\; {\mathfrak B}_{s_{0}+m}(2\varepsilon^\gamma)\rightarrow
W_{s_{0}}$ is G-differentiable with respect to
$u$, and there are constants $a$, $b$ and $g>0$ such that:$\ $

\begin{itemize}
\item for all $0<\varepsilon\leq 1$ and $\ s_{0}\leq s\leq S-m\,,$ if $u\in {\mathfrak B}_{s_{0}+m}(2\varepsilon^\gamma)\cap V_{s+m}$ and $h\in V_{s+m}\,,$
then $D\Phi_{\varepsilon}\left(  u\right)  h\in W_s\,,$ with the tame direct estimate
\begin{equation}
\left\vert D\Phi_{\varepsilon}\left(  u\right)  h\right\vert _{s}^{\prime}\leq
a\left(  \left\vert h\right\vert _{s+m}+\varepsilon^{-\gamma}\left\vert u\right\vert _{s+m}%
\left\vert h\right\vert _{s_{0}+m}\right)  \label{tamedirectepsilonbis}%
\end{equation}

\item for all $0<\varepsilon\leq 1$ and $ u\in {\mathfrak B}_{s_{0}+\max\{m,\ell\}}(2\varepsilon^\gamma)\,$, there is
$L_{\varepsilon}\left(  u\right)  :\,W_{s_0+\ell^{\prime}}\rightarrow V_{s_0}$ linear,
satisfying:%
\begin{equation}
\forall k\in W_{s_0+\ell^{\prime}},\ \ \ D\Phi_{\varepsilon}\left(  u\right)
L_{\varepsilon}\left(  u\right)  k=k \label{rightinversebis}%
\end{equation}
and for all $s_{0}\leq s\leq
S-\max\left\{  \ell,\ell^{\prime}\right\}\,,$ if $ u\in {\mathfrak B}_{s_{0}+\max\{m,\ell\}}(2\varepsilon^\gamma)\cap V_{s+\ell}\,$ and $k\in W_{s+\ell^{\prime}}\,$, then
$L_{\varepsilon}\left(
u\right)  k\in V_s\,,$ with the tame inverse estimate
\begin{equation}
\left\vert L_{\varepsilon}\left(
u\right)  k\right\vert _{s}\leq b\varepsilon^{-g}\left(  \left\vert
k\right\vert _{s+\ell^{\prime}}^{\prime}+\varepsilon^{-\gamma}\left\vert k\right\vert _{s_{0}%
+\ell^{\prime}}^{\prime}\left\vert u\right\vert _{s+\ell}\right)
\label{tameinverseepsilonbis}%
\end{equation}

\end{itemize}
}
\bigskip

Under these assumptions, the maps $F_\varepsilon:\,B_{s_0+m}\to W_{s_0}$ form an $S$-tame differentiable family for the ``old"  norms $\Vert\cdot\Vert_s\,$, $\Vert\cdot\Vert'_s\,$. So the following result holds, as a direct consequence of our main theorem:\bigskip

\begin{corollary}
\label{Cor9} Consider two tame Banach scales $(V_s,\vert\cdot\vert_{s})_{0\leq s\leq S}$ and $(W_s,\vert\cdot\vert'_{s})_{0\leq s\leq S}\,$, nonnegative constants $s_0,\, m,\ell,\,\ell',\,g,\,\gamma$, and two positive constants $a,\,b$. Take any $g^{\prime}>g$, $s_1\geq s_0+\max\{m,\ell\}$ and $\delta>s_1+\ell'$. For $S$ large enough and $r>0$ small, if a family of G-differentiable maps $\Phi_\varepsilon\,:\; {\mathfrak B}_{s_{0}+m}(2\varepsilon^\gamma)\rightarrow
W_{s_{0}}$ $(0< \varepsilon \leq 1)$ satisfies (\ref{tamedirectepsilonbis},\ref{rightinversebis},\ref{tameinverseepsilonbis}), and, in addition, for
some ${\mathfrak a}_\varepsilon\in {\mathfrak B}_{S}(\varepsilon^\gamma)\,,\,$
$\vert \Phi_\varepsilon({\mathfrak a}_\varepsilon)\vert _{\delta}^{\prime}\leq r\varepsilon^{\gamma+g^{\prime}}$, then
there exists some $u_{\varepsilon}\in {\mathfrak B}_{s_1,\varepsilon}(\varepsilon^\gamma)$ such that:
\begin{align*}
&\Phi_\varepsilon({\mathfrak a}_\varepsilon+u_{\varepsilon})  =0\ \\
&\vert u_{\varepsilon}\vert _{s_1} \leq r^{-1}\, \varepsilon^{-g^{\prime}}
\vert \Phi_\varepsilon({\mathfrak a}_\varepsilon)\vert _{\delta}^{\prime}
\end{align*}
\end{corollary}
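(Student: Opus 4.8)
The plan is to derive Corollary~\ref{Cor9} as a direct application of Theorem~\ref{Thm8}, so the work consists of three bookkeeping steps: change norms, check that the hypotheses transfer, and translate the conclusion back. First I would introduce the $\varepsilon$-independent norms $\Vert\cdot\Vert_s=\varepsilon^{-\gamma}\vert\cdot\vert_s$ and $\Vert\cdot\Vert'_s=\varepsilon^{-\gamma}\vert\cdot\vert'_s$, and note that both $(V_s,\Vert\cdot\Vert_s)$ and $(W_s,\Vert\cdot\Vert'_s)$ are still tame Banach scales with the \emph{same} constants $A_i$, $A'_i$, $S$: the estimates \eqref{loss}, \eqref{gain}, \eqref{inter} are homogeneous of degree zero in the norms, so rescaling all norms by the common factor $\varepsilon^{-\gamma}$ leaves them untouched. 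In these new norms, $u\in{\mathfrak B}_{s}(\rho)$ means exactly $\Vert u\Vert_s\le\rho\,\varepsilon^{-\gamma}$; in particular ${\mathfrak B}_{s_0+m}(2\varepsilon^\gamma)$ contains the unit ball $B_{s_0+m}$ (since $2\varepsilon^\gamma\cdot\varepsilon^{-\gamma}=2\ge1$), and ${\mathfrak B}_{s_0+\max\{m,\ell\}}(2\varepsilon^\gamma)$ likewise contains $B_{s_0+\max\{m,\ell\}}$, so the maps $F_\varepsilon(u)=\Phi_\varepsilon({\mathfrak a}_\varepsilon+u)-\Phi_\varepsilon({\mathfrak a}_\varepsilon)$ are well-defined on $B_{s_0+m}$. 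Since ${\mathfrak a}_\varepsilon\in{\mathfrak B}_S(\varepsilon^\gamma)$ and $u\in B_{s_0+m}$ force ${\mathfrak a}_\varepsilon+u\in{\mathfrak B}_{s_0+m}(2\varepsilon^\gamma)$, everything stays in the domain.

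Next I would verify the tame direct estimate \eqref{tamedirectepsilon} and the tame inverse estimate \eqref{tameinverseepsilon} for $F_\varepsilon$ from \eqref{tamedirectepsilonbis}, \eqref{rightinversebis}, \eqref{tameinverseepsilonbis}. For the derivative, $DF_\varepsilon(u)h=D\Phi_\varepsilon({\mathfrak a}_\varepsilon+u)h$; in the new norms the left side of \eqref{tamedirectepsilonbis} is $\varepsilon^\gamma\Vert DF_\varepsilon(u)h\Vert'_s$, the terms $\vert h\vert_{s+m}=\varepsilon^\gamma\Vert h\Vert_{s+m}$ and $\varepsilon^{-\gamma}\vert {\mathfrak a}_\varepsilon+u\vert_{s+m}\vert h\vert_{s_0+m}=\varepsilon^\gamma\Vert{\mathfrak a}_\varepsilon+u\Vert_{s+m}\Vert h\Vert_{s_0+m}$, so the factor $\varepsilon^\gamma$ cancels throughout and we get $\Vert DF_\varepsilon(u)h\Vert'_s\le a(\Vert h\Vert_{s+m}+\Vert{\mathfrak a}_\varepsilon+u\Vert_{s+m}\Vert h\Vert_{s_0+m})$. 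Then split $\Vert{\mathfrak a}_\varepsilon+u\Vert_{s+m}\le\Vert u\Vert_{s+m}+\Vert{\mathfrak a}_\varepsilon\Vert_{s+m}\le\Vert u\Vert_{s+m}+1$ (using ${\mathfrak a}_\varepsilon\in{\mathfrak B}_S(\varepsilon^\gamma)$, i.e. $\Vert{\mathfrak a}_\varepsilon\Vert_{s+m}\le\Vert{\mathfrak a}_\varepsilon\Vert_S\le1$, provided $s+m\le S$), giving $\Vert DF_\varepsilon(u)h\Vert'_s\le a(2\Vert h\Vert_{s+m}+\Vert u\Vert_{s+m}\Vert h\Vert_{s_0+m})\le 2a(\Vert h\Vert_{s+m}+\Vert u\Vert_{s+m}\Vert h\Vert_{s_0+m})$, which is \eqref{tamedirectepsilon} with constant $2a$. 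The inverse estimate is handled the same way: set $L_\varepsilon^{F}(u):=L_\varepsilon({\mathfrak a}_\varepsilon+u)$; then \eqref{rightinverse} follows from \eqref{rightinversebis}, and in \eqref{tameinverseepsilonbis} every term again carries a common $\varepsilon^\gamma$ that cancels, while $\varepsilon^{-\gamma}\vert k\vert'_{s_0+\ell'}\vert {\mathfrak a}_\varepsilon+u\vert_{s+\ell}=\varepsilon^\gamma\Vert k\Vert'_{s_0+\ell'}\Vert{\mathfrak a}_\varepsilon+u\Vert_{s+\ell}$, and bounding $\Vert{\mathfrak a}_\varepsilon+u\Vert_{s+\ell}\le\Vert u\Vert_{s+\ell}+1$ turns \eqref{tameinverseepsilonbis} into \eqref{tameinverseepsilon} with constant $2b$. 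Thus $(F_\varepsilon)$ is an $S$-tame differentiable family with tame right-invertible differential, for the same parameters $s_0,m,\ell,\ell',g$ (with $a,b$ replaced by $2a,2b$, which is harmless).

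Finally I would invoke Theorem~\ref{Thm8} with these same $s_1\ge s_0+\max\{m,\ell\}$, $\delta>s_1+\ell'$, $g'>g$, and with $v=v_\varepsilon:=-\Phi_\varepsilon({\mathfrak a}_\varepsilon)$. The theorem produces $S_0$ and $r_0>0$ (depending only on the listed parameters and the $A_i,A'_i$) such that if $\Vert v_\varepsilon\Vert'_\delta\le r_0\varepsilon^{g'}$ then there is $u_\varepsilon\in B_{s_1}$ with $F_\varepsilon(u_\varepsilon)=v_\varepsilon$ and $\Vert u_\varepsilon\Vert_{s_1}\le r_0^{-1}\varepsilon^{-g'}\Vert v_\varepsilon\Vert'_\delta$. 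Translating: $\Vert v_\varepsilon\Vert'_\delta=\varepsilon^{-\gamma}\vert\Phi_\varepsilon({\mathfrak a}_\varepsilon)\vert'_\delta$, so the smallness hypothesis becomes $\vert\Phi_\varepsilon({\mathfrak a}_\varepsilon)\vert'_\delta\le r_0\varepsilon^{\gamma+g'}$; set $r:=r_0$. The conclusion $F_\varepsilon(u_\varepsilon)=v_\varepsilon$ is precisely $\Phi_\varepsilon({\mathfrak a}_\varepsilon+u_\varepsilon)=0$, and $u_\varepsilon\in B_{s_1}$ reads $\Vert u_\varepsilon\Vert_{s_1}\le1$, i.e. $\vert u_\varepsilon\vert_{s_1}\le\varepsilon^\gamma$, so $u_\varepsilon\in{\mathfrak B}_{s_1}(\varepsilon^\gamma)$; and the quantitative bound becomes $\vert u_\varepsilon\vert_{s_1}=\varepsilon^\gamma\Vert u_\varepsilon\Vert_{s_1}\le\varepsilon^\gamma r_0^{-1}\varepsilon^{-g'}\cdot\varepsilon^{-\gamma}\vert\Phi_\varepsilon({\mathfrak a}_\varepsilon)\vert'_\delta=r^{-1}\varepsilon^{-g'}\vert\Phi_\varepsilon({\mathfrak a}_\varepsilon)\vert'_\delta$, as claimed. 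There is no real obstacle here — the whole point is that Theorem~\ref{Thm8} was designed to make this reduction mechanical — but the one place to be careful is the passage ${\mathfrak a}_\varepsilon+u\in{\mathfrak B}_{s_0+m}(2\varepsilon^\gamma)$ and the bounds $\Vert{\mathfrak a}_\varepsilon\Vert_{s+m}\le1$, $\Vert{\mathfrak a}_\varepsilon\Vert_{s+\ell}\le1$, which require $s+m\le S$ and $s+\ell\le S$ on the relevant range of $s$; these hold because $S$ is taken large (larger than $S_0$, which in particular exceeds $s_1+\ell'+\ell$). A minor additional check is that the affine shift by ${\mathfrak a}_\varepsilon$ does not spoil G-differentiability, which is immediate since translation is smooth and $D F_\varepsilon(u)=D\Phi_\varepsilon({\mathfrak a}_\varepsilon+u)$.
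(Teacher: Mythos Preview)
Your proposal is correct and follows exactly the approach of the paper: introduce the rescaled norms $\Vert\cdot\Vert_s=\varepsilon^{-\gamma}\vert\cdot\vert_s$, verify that $F_\varepsilon(u)=\Phi_\varepsilon({\mathfrak a}_\varepsilon+u)-\Phi_\varepsilon({\mathfrak a}_\varepsilon)$ is an $S$-tame differentiable family with tame right-invertible differential in these norms, and apply Theorem~\ref{Thm8}. The paper merely asserts this reduction in one sentence (``the following result holds, as a direct consequence of our main theorem''), whereas you have carefully spelled out the cancellation of the $\varepsilon^\gamma$ factors and the absorption of ${\mathfrak a}_\varepsilon$ into the constants via $\Vert{\mathfrak a}_\varepsilon\Vert_S\le 1$.
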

\bigskip

In \cite{TZ} ({\it Theorem 2.19 and Remarks 2.9, 2.14}), the assumptions are stronger, since they involve the second derivative of $\Phi_\varepsilon$. More importantly, we only need the norm of $\Phi_\varepsilon({\mathfrak a}_\varepsilon)$ to be controlled by $\varepsilon^{\gamma+g'}$ with $g'>g$, provided $S\geq S_0$ with $S_0=\mathcal{O}\left(\frac{1}{g'-g}\right)$, while in \cite{TZ} (Assumption 2.15 and Remark 2.23), due to quadratic estimates, one needs $g'>2g$ with the faster growth $S_0=\mathcal{O}\left(\frac{1}{(g'-2g)^2}\right)$.\bigskip

\section{Proof of Theorem \ref{Thm8}}
\bigskip

The proof consists in constructing a sequence $(u_{n})_{n\geq 1}$ which
converges to a solution $u$ of $F\left(u\right)=v$. At each step, in order to find $u_n$, we solve a nonlinear equation in a Banach space, using Theorem 2 in \cite{IE3}, which we restate below for the reader's convenience (the notation $||| \, L\, |||$ stands for the operator norm of any linear continuous map $L$ between two Banach spaces):\bigskip

\begin{theorem}
\label{thm1} Let $X$ and $Y$ be Banach spaces. Let $f:B_X(0,R)\rightarrow Y$ be
continuous and G\^{a}teaux-differentiable, with $f\left(  0\right)  =0$.
Assume that the derivative $Df\left(  u\right)  $ has a right-inverse
$L\left(  u\right)  $, uniformly bounded on the ball $B_X(0,R)$:
\begin{align*}
&\forall (u,k)\in B_X(0,R)\times Y\text{, \ }Df\left(  u\right) L\left(  u\right)   \,  k=k\ \\
&\sup\left\{ \, |||\, L\left(  u\right)  ||| \ :\ \left\Vert
u\right\Vert_X < R\right\}   <M\,.
\end{align*}

Then, for every $v\in Y$ with $\left\Vert v\right\Vert_Y < RM^{-1}$ there is
some $u\in X$ satisfying:
\[
f\left(  u\right)  =v\;\text{ and }\;\left\Vert u\right\Vert_X \leq M\left\Vert v\right\Vert_Y < R\,.
\]

\end{theorem}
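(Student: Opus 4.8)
The plan is to derive Theorem \ref{thm1} from Ekeland's variational principle; the Picard and Newton schemes are unavailable here because $f$ is assumed merely continuous and G\^ateaux-differentiable, not $C^1$. Fix $v\in Y$ with $\left\Vert v\right\Vert_Y<RM^{-1}$ and pick a radius $R'$ with $M\left\Vert v\right\Vert_Y<R'<R$, so that $\overline{B_X(0,R')}$ is a complete metric space contained in the open ball where $f$ is defined. On it, set $\varphi(u):=\left\Vert f(u)-v\right\Vert_Y$. Since $f$ is continuous, $\varphi$ is continuous and nonnegative, with $\varphi(0)=\left\Vert v\right\Vert_Y$.

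Next I apply Ekeland's variational principle to $\varphi$ on $\overline{B_X(0,R')}$, with base point $0$ (legitimate since $\varphi(0)=\left\Vert v\right\Vert_Y\leq \inf\varphi+\left\Vert v\right\Vert_Y$, as $\inf\varphi\geq 0$) and with a free parameter $\lambda$ chosen so that $M\left\Vert v\right\Vert_Y<\lambda<R'$. This produces $\bar u\in\overline{B_X(0,R')}$ with $\varphi(\bar u)\leq\varphi(0)=\left\Vert v\right\Vert_Y$, with $\left\Vert\bar u\right\Vert_X\leq\lambda<R'$ (so $\bar u$ is an interior point), and with the ``almost-critical point'' property
\[
\varphi(w)>\varphi(\bar u)-\frac{\left\Vert v\right\Vert_Y}{\lambda}\,\left\Vert w-\bar u\right\Vert_X\qquad\text{for all }w\in\overline{B_X(0,R')},\ w\neq\bar u.
\]
By the choice of $\lambda$, the slope constant $\left\Vert v\right\Vert_Y/\lambda$ is strictly less than $1/M$.

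The heart of the argument is that $\varphi(\bar u)=0$, and this is where right-invertibility enters. Suppose $\varphi(\bar u)>0$, i.e. $f(\bar u)\neq v$, and set $h:=-L(\bar u)\bigl(f(\bar u)-v\bigr)\in X$. Then $h\neq 0$, and $\left\Vert h\right\Vert_X<M\,\varphi(\bar u)$ because $|||L(\bar u)|||<M$, while $Df(\bar u)h=-\bigl(f(\bar u)-v\bigr)$ by the right-inverse identity and linearity of $Df(\bar u)$. Since $\bar u$ is interior, $\bar u+th$ stays in the ball for small $t>0$; G-differentiability gives $f(\bar u+th)-v=(1-t)\bigl(f(\bar u)-v\bigr)+o(t)$, hence $\varphi(\bar u+th)\leq(1-t)\varphi(\bar u)+o(t)$. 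Inserting $w=\bar u+th$ into the slope inequality, dividing by $t$ and letting $t\to 0^+$, I obtain $\frac{\left\Vert v\right\Vert_Y}{\lambda}\left\Vert h\right\Vert_X\geq\varphi(\bar u)$; combined with $\left\Vert h\right\Vert_X<M\varphi(\bar u)$ and $\varphi(\bar u)>0$ this forces $M\left\Vert v\right\Vert_Y/\lambda>1$, i.e. $\lambda<M\left\Vert v\right\Vert_Y$, contradicting the choice of $\lambda$. Hence $f(\bar u)=v$, and the Ekeland estimate gives $\left\Vert\bar u\right\Vert_X\leq\lambda$; since $\lambda$ may be taken arbitrarily close to $M\left\Vert v\right\Vert_Y$ (and always $<R$), one reads off $\left\Vert\bar u\right\Vert_X\leq M\left\Vert v\right\Vert_Y<R$.

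The only delicate points are bookkeeping: choosing $R'$ and $\lambda$ so that $\bar u$ and the perturbed points $\bar u+th$ stay strictly inside $B_X(0,R)$, so that the one-sided computation is legitimate, and extracting the sharp constant $M\left\Vert v\right\Vert_Y$ from the family of bounds $\left\Vert\bar u\right\Vert_X\leq\lambda$ over $M\left\Vert v\right\Vert_Y<\lambda<R$. I do not expect any genuine obstacle here: Theorem \ref{thm1} is a purely Banach-space statement in which neither the loss of derivatives nor the singular parameter $\varepsilon$ plays any role, all of that being handled later when this theorem is applied inside the Galerkin scheme.
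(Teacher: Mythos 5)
Your overall strategy is exactly the one behind the paper's statement: the paper does not reprove Theorem \ref{thm1} but quotes it from \cite{IE3}, where it is established via Ekeland's variational principle, and your EVP-plus-perturbation-by-$h=-L(\bar u)(f(\bar u)-v)$ argument is that proof. The core of your argument is correct: the choice of $R'<R$, the interiority of $\bar u$, the one-sided Gâteaux computation $\varphi(\bar u+th)\leq(1-t)\varphi(\bar u)+o(t)$, and the contradiction $\varphi(\bar u)\leq\frac{\Vert v\Vert_Y}{\lambda}\Vert h\Vert_X<\frac{M\Vert v\Vert_Y}{\lambda}\varphi(\bar u)$ are all sound.

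The one step that does not hold as written is the final extraction of the bound $\Vert u\Vert_X\leq M\Vert v\Vert_Y$. For each fixed $\lambda\in(M\Vert v\Vert_Y,R')$ your argument produces some solution $u_\lambda$ with $\Vert u_\lambda\Vert_X\leq\lambda$, but these solutions may all be different (there is no uniqueness), and in an infinite-dimensional Banach space there is no compactness allowing you to let $\lambda\downarrow M\Vert v\Vert_Y$ and obtain a single $u$ with $\Vert u\Vert_X\leq M\Vert v\Vert_Y$; as written you only prove the bound $M\Vert v\Vert_Y+\epsilon$ for every $\epsilon>0$, which is weaker than the stated theorem. The repair is immediate and uses slack you already have: dispose of the trivial case $v=0$ separately, and otherwise apply EVP with $\lambda=M\Vert v\Vert_Y$ exactly (slope $1/M$), still on $\overline{B_X(0,R')}$ with $M\Vert v\Vert_Y<R'<R$. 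Interiority of $\bar u$ is preserved since $\Vert\bar u\Vert_X\leq M\Vert v\Vert_Y<R'$, and the contradiction survives because the hypothesis is the \emph{strict} inequality $\sup\,|||\,L(u)\,|||<M$, so $\Vert h\Vert_X<M\varphi(\bar u)$ and hence $\frac{1}{M}\Vert h\Vert_X<\varphi(\bar u)$, contradicting the slope inequality; EVP then directly yields $\Vert\bar u\Vert_X\leq\lambda=M\Vert v\Vert_Y$ for the same point $\bar u$ that solves $f(\bar u)=v$.
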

\bigskip

Note first that this is a local surjection theorem, not an inverse function
theorem:\ with respect to the IFT, we lose uniqueness.\ On the other hand, the
regularity requirement on $f$ and the smallness condition on $v$ are much weaker. As mentioned in the Introduction, for a $C^1$ functional in finite dimensions, this theorem has been proved a long time ago by Wazewski \cite{W}
by a continuation argument (we thank Sotomayor for drawing our attention to this result).
For a comparison of the existence and uniqueness domains in the $C^2$ case with dim$\,X\,=\,$dim$\,Y\,$, see \cite{Hart}, chapter II, exercise 2.3.\bigskip

It turns out that the proof of Theorem \ref{Thm8} is much easier if one assumes that the family $(DF_\varepsilon)$ is tame {\it Galerkin} right-invertible. But most applications require that $(DF_\varepsilon)$ be tame right-invertible. Let us explain why the proof is longer in this case.
In our algorithm, we will use two sequences of projectors $\Pi_n:=\Pi(\Lambda_n)$ and
$\Pi'_n:=\Pi'(M_n)$ with associated ranges $E_n=E(\Lambda_n)$ and $E'_n=E'(M_n)$, where $\Lambda^0\approx \varepsilon^{-\eta}$ for some small $\eta>0$, 
$\Lambda_n=\Lambda_0^{\alpha^n}$ for some $\alpha>1$ close to $1$, and $M_n=\Lambda_n^{\vartheta}$
for some $\vartheta\leq 1$ such that $\vartheta\alpha>1$. The algorithm consists in finding, by induction on $n$ and using Theorem \ref{thm1} at each step, a solution $u_n\in E_n$ of the problem
$\Pi'_nF_\varepsilon(u_n)=\Pi'_{n-1}v$. For this, we need $\Pi'_n DF_\varepsilon(u)_{\vert_{E_n}}$ to be invertible for $u$
in a certain ball $\mathcal{B}_n$, with estimates on the right inverse for a certain norm $\Vert\cdot\Vert_{\mathcal{N}_n}$.

When the family $(DF_\varepsilon)$ is tame Galerkin right-invertible,
we can take $\vartheta=1$ so that $M_n=\Lambda_n$, instead of assuming $\vartheta<1$. Then
the right-invertibility of $\Pi'_n DF_\varepsilon(u)_{\vert_{E_n}}$ follows immediately from the definition.

But when $(DF_\varepsilon)$ is only tame right-invertible,
it is crucial to take $\vartheta<1$. The intuitive idea is the following. One can think of $DF_\varepsilon(u)$ as very large right-invertible matrix. The topological argument we use requires $\Pi'_n DF_\varepsilon(u)_{\vert_{E_n}}$ to have a right-inverse for $u$ in a suitable ball. If we take $M_n=\Lambda_n$, this is like asking that a square submatrix of a right-invertible matrix be invertible. In general this is not true. But a rectangular submatrix, with more columns than lines, will be right-invertible if the full matrix is and if there are enough columns in the submatrix. This is why we impose $M_n<\Lambda_n$ when we do not assume the tame Galerkin right-invertibility.

In the sequel, we assume that the family $(DF_\varepsilon)$ is tame right-invertible, so we take $\vartheta<1$, and we point out the specific places where the arguments would be easier assuming, instead, that $(DF_\varepsilon)$ is tame {\it Galerkin} right-invertible.

The sequence $u_n$
depends on a number of parameters $\eta, \alpha, \beta, \vartheta$ and $\sigma$ satisfying various conditions: in the first subsection we prove that these conditions are compatible. In the next one, we construct an initial point $u_1$ depending on $\eta,\,\alpha$ and $\vartheta$. In the third one we construct, by induction, the remaining points $u_n$ which also depend on $\beta$ and $\sigma$. Finally we prove that the sequence $(u_n)$ converges to a solution $u$ of the problem, satisfying the desired estimates.\newpage

\subsection{Choosing the values of the parameters}

We are given $s_1\geq s_{0}+\max\left\{ m,\ell\right\},$ $\delta> s_1 +\ell'$ and $g'>g$. These are fixed throughout the proof.\medskip

We introduce positive parameters $\eta,\alpha,\beta,\vartheta$ and $\sigma$
satisfying the following conditions:
\begin{align}
\eta\, & <\frac{g'-g}{\max\left\{ \vartheta \ell',\ell\right\}}\label{eq:i0}\\
\frac{1}{\alpha} & <\vartheta<1\label{eq:i1}\\
\left(1-\vartheta\right)\left(\sigma-\delta\right) & >\vartheta m+\max\left\{ \ell,\vartheta \ell'\right\} +\frac{g}{\eta}\label{eq:i2}\\
\sigma & >\alpha\beta+s_1\label{eq:i4}\\
\left(1+\alpha-\vartheta\alpha\right)\left(\sigma-s_{0}\right) & >\alpha\beta+\alpha\left(m+\ell\right)+\ell'+\frac{g}{\eta}\label{eq:i5}\\
\left(1-\vartheta\right)\left(\sigma-s_{0}\right) & >m+\vartheta \ell'+\frac{g}{\alpha\eta}\label{eq:i6}\\
\delta & >s_{0}+\frac{\alpha}{\vartheta}\left(\sigma-s_{0}-\alpha\beta+\ell"\right)\label{eq:i7}\\
\left(\alpha-1\right)\beta & >\left(1-\vartheta\right)\left(\sigma-s_{0}\right)+\vartheta m+\ell"+\frac{g}{\eta}\label{eq:i8}\\
\ell" & =\max\left\{ \left(\alpha-1\right)\ell+\ell',\alpha\vartheta \ell'\right\} \label{eq:i9}
\end{align}

Note that condition (\ref{eq:i2}) implies that $\delta < \sigma $ . Note also that condition (\ref{eq:i7}) may be rewritten as
$$\beta >\frac{1}{\alpha}\left(\sigma-\delta\right)+\left(1-\frac{\vartheta}{\alpha}\right)\frac{\delta-s_{0}}{\alpha}+\frac{\ell"}{\alpha}$$

which implies the simpler inequality
\begin{equation}\beta >\frac{1}{\alpha}\left(\sigma-\delta\right)\label{eq:i3}
\end{equation}

Inequality (\ref{eq:i3}) will also be used in the proof.\medskip

 If we assume tame Galerkin right-invertibility instead of tame right-invertibility, we can replace condition (\ref{eq:i2}) by the weaker condition $\delta < \sigma$, we do not need conditions (\ref{eq:i5}), (\ref{eq:i6}) any more, and we can take $\vartheta=1$ instead of $\vartheta<1$.

\bigskip

\begin{lem}
\label{compatibility}
The set of parameters $\left(\eta,\alpha,\beta,\vartheta,\sigma\right)$ satisfying
the above conditions is non-empty. More precisely, there are some ${\alpha}>1$ and ${\zeta}>0$ depending only on $(s_0,\,m,\,\ell,\,\ell',\,s_1,\,\delta)$, such that, for $\vartheta={\alpha}^{-1/2}$ and for every $0<\eta<1$, there exist $(\beta,\sigma)$
with $\sigma<{\zeta}g/\eta$ such that the constraints (\ref{eq:i2}) to
(\ref{eq:i9}) are satisfied.
\end{lem}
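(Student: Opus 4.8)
The plan is to fix $\alpha>1$ first, depending only on $(s_{0},m,\ell,\ell',s_{1},\delta)$, then set $\vartheta=\alpha^{-1/2}$ and let $\ell''$ be the quantity defined by (\ref{eq:i9}). With this choice, (\ref{eq:i1}) is automatic since $1/\alpha<\alpha^{-1/2}<1$; (\ref{eq:i3}) follows from (\ref{eq:i7}) as already noted; and (\ref{eq:i2}) forces $\delta<\sigma$. So the task reduces to producing, for given $\eta>0$, a pair $(\beta,\sigma)$ with $\sigma<\zeta g/\eta$ satisfying (\ref{eq:i2}) and (\ref{eq:i4})--(\ref{eq:i8}). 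First I would rewrite these constraints: (\ref{eq:i2}) and (\ref{eq:i6}) are lower bounds on $\sigma$ alone, each of the form $\sigma>C_{i}(\alpha)+D_{i}(\alpha)\,g/\eta$; (\ref{eq:i7}) and (\ref{eq:i8}) are lower bounds $\beta>L_{7}(\sigma)$ and $\beta>L_{8}(\sigma,\eta)$; and (\ref{eq:i4}), (\ref{eq:i5}) are upper bounds $\beta<U_{4}(\sigma)$ and $\beta<U_{5}(\sigma,\eta)$. Here $L_{7},L_{8},U_{4},U_{5}$ are each affine in $\sigma$ with slope depending only on $\alpha$, and the dependence on $\eta$ enters only through an additive term proportional to $g/\eta$. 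Hence, for fixed $\sigma$ and $\eta$, an admissible $\beta$ exists precisely when $\max\{L_{7},L_{8}\}<\min\{U_{4},U_{5}\}$, i.e. when the four inequalities $L_{7}<U_{4}$, $L_{7}<U_{5}$, $L_{8}<U_{4}$, $L_{8}<U_{5}$ all hold.

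The crucial one is $L_{7}<U_{4}$. Both sides have the same $\sigma$-slope $1/\alpha$, so this inequality does not involve $\sigma$ or $\eta$ at all; after simplification it reads $\ell''<s_{0}-s_{1}+\tfrac{\vartheta}{\alpha}(\delta-s_{0})$. Letting $\alpha\to1^{+}$, one has $\vartheta\to1$ and, by (\ref{eq:i9}), $\ell''\to\ell'$, so the right-hand side tends to $\delta-s_{1}$ and the inequality becomes exactly the hypothesis $\delta>s_{1}+\ell'$. By continuity I can therefore fix $\alpha>1$ close enough to $1$, depending only on $(s_{0},m,\ell,\ell',s_{1},\delta)$, so that $L_{7}<U_{4}$ holds with a strict margin; this also fixes $\vartheta=\alpha^{-1/2}$ and $\ell''$. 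I expect this limiting argument, and checking that the margin survives for $\alpha$ slightly larger than $1$, to be the only genuinely delicate point; the rest is bookkeeping.

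With $\alpha$ now fixed, the three remaining inequalities each have the feature that the upper side has strictly larger $\sigma$-slope than the lower side: with $\vartheta=\alpha^{-1/2}$ one computes the slopes to be $\tfrac{1-\vartheta}{\alpha-1}=\tfrac{1}{\alpha+\sqrt{\alpha}}$ for $L_{8}$, $\tfrac1\alpha$ for $L_{7}$ and $U_{4}$, and $\tfrac{1+\alpha-\vartheta\alpha}{\alpha}=\tfrac1\alpha+(1-\vartheta)$ for $U_{5}$, and indeed $\tfrac{1}{\alpha+\sqrt{\alpha}}<\tfrac1\alpha<\tfrac1\alpha+(1-\vartheta)$. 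Consequently each of $L_{7}<U_{5}$, $L_{8}<U_{4}$, $L_{8}<U_{5}$, together with (\ref{eq:i2}) and (\ref{eq:i6}), is satisfied as soon as $\sigma$ exceeds a threshold of the form $C(\alpha)+D(\alpha)\,g/\eta$ with $C,D$ depending only on the data (both blow up like $(\alpha-1)^{-1}$ as $\alpha\to1$, but they remain data-dependent constants). I would then take $\sigma$ slightly above the maximum of all these thresholds; since the term proportional to $g/\eta$ dominates, this gives $\sigma<\zeta g/\eta$ for a suitable $\zeta$ depending only on $(s_{0},m,\ell,\ell',s_{1},\delta)$. Finally I pick any $\beta$ in the nonempty interval $\left(\max\{L_{7},L_{8}\},\min\{U_{4},U_{5}\}\right)$, which is automatically positive because $\sigma$ is large; then $(\eta,\alpha,\beta,\vartheta,\sigma)$ satisfies all of (\ref{eq:i1})--(\ref{eq:i9}), as required.
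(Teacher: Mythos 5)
Your proposal is correct and follows essentially the same route as the paper: you choose $\vartheta=\alpha^{-1/2}$ with $\alpha$ close to $1$ so that the two parallel constraints of slope $1/\alpha$ in the $(\beta,\sigma)$-plane (your $L_{7}<U_{4}$, which is exactly the paper's condition $\delta>s_{0}+\tfrac{\alpha}{\vartheta}(s_{1}-s_{0}+\ell'')$, obtained by the same limit $\ell''\to\ell'$ and the hypothesis $\delta>s_{1}+\ell'$) leave a gap, then use the slope ordering $\tfrac{1-\vartheta}{\alpha-1}<\tfrac1\alpha<\tfrac1\alpha+1-\vartheta$ to see that all remaining constraints hold once $\sigma$ exceeds a threshold affine in $g/\eta$. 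The only cosmetic difference is that the paper pins $\beta$ to the explicit line $\beta=(\sigma-s_{1}-\tau)/\alpha$ between the two parallel lines, whereas you keep $\beta$ free and intersect the half-planes directly.
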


\begin{proof}
Since $\delta>s_{1}+\ell'$, and $\ell"\rightarrow \ell'$ when both $\alpha$ and $\vartheta$ tend to $1$,
it is possible to choose $\vartheta$ and $\alpha=\vartheta^{-2}$
close enough to $1$ so that $\delta>s_{0}+\frac{\alpha}{\vartheta}\left(s_1-s_{0}+\ell"\right)$.
Take some $\tau$ with $0<\tau<\frac{\vartheta}{\alpha}\left(\delta-s_{0}\right)-s_1+s_{0}-\ell"\text{, }$
and set:
\begin{equation}
\beta=\frac{\sigma}{\alpha}-\frac{s_1+\tau}{\alpha}\label{eq:i10}
\end{equation}
 Then conditions (\ref{eq:i1}), (\ref{eq:i4}) and (\ref{eq:i7}) are satisfied.

The remaining inequalities are constraints on $\beta $ and $\sigma $. They can be rewritten as follows:
\begin{align}
\sigma & >\delta+\frac{1}{1-\vartheta}\left[\vartheta m+\max\left\{ \ell,\vartheta \ell'\right\} +\frac{g}{\eta}\right]\label{eq:i11}\\
\beta< & \left(\frac{1}{\alpha}+1-\vartheta\right)\sigma-m-\ell-\frac{\ell'}{\alpha}-\left(\frac{1}{\alpha}+1-\vartheta\right)s_{0}-\frac{g}{\alpha\eta}\label{eq:i13}\\
\sigma & >s_{0}+\frac{1}{1-\vartheta}\left(m+\vartheta \ell'+\frac{g}{\alpha\eta}\right)\label{eq:i14}\\
\beta & >\frac{1-\vartheta}{\alpha-1}\sigma+\frac{1}{\alpha-1}\left(\vartheta m+\ell"+\frac{g}{\eta}-\left(1-\vartheta\right)s_{0}\right)\label{eq:i15}
\end{align}

These inequalities define half-planes in the $ (\sigma, \beta)$-plane. 
Since $\alpha\vartheta>1$, the slopes in (\ref{eq:i10}),
(\ref{eq:i13}) and (\ref{eq:i15}) are ordered as follows: 
\[
0<\frac{1-\vartheta}{\alpha-1} <\frac{1}{\alpha}<\frac{1}{\alpha}+1-\vartheta<1
\]

As a consequence, for the chosen values of $\alpha,\vartheta$ and $\tau$,
the domain defined by these three conditions in the $\left(\sigma, \beta\right)$-plane
is an infinite half-line stretching to the North-East. The remaining
two, (\ref{eq:i11}) and (\ref{eq:i14}), just tell us that $\sigma$
should be large enough. So the set of solutions is of the form $\sigma>\bar{\sigma}$,
$\beta=\frac{\sigma}{\alpha}-\frac{s_1+\tau}{\alpha}$ and $\bar{\sigma}$
is clearly a piecewise affine function of $g/\eta$. We may thus choose $\sigma < \zeta g / \eta$ for some constant $\zeta\,.$
\end{proof}

{\bf Remark.}
{\it As already mentioned, if we assume that $(DF_\varepsilon)$ is tame Galerkin right-invertible, (\ref{eq:i2}) can be replaced by the condition $\delta<\sigma$, and (\ref{eq:i5}) and (\ref{eq:i6}) are not needed. The remaining conditions can be satisfied by taking $\vartheta=1$ and for a larger set of the other parameters. The corresponding variant of Lemma \ref{compatibility} has a simpler proof. We can choose $\alpha>1$ such that $\delta>s_{0}+\alpha\left(s_1-s_{0}+\ell"\right)$ and $\tau$ such that $0<\tau< \frac{1}{\alpha}\left(\delta-s_{0}\right)-s_1+s_{0}-\ell"$, and we may impose condition (\ref{eq:i10}). Then conditions (\ref{eq:i11}), (\ref{eq:i13}) and (\ref{eq:i14}) are no longer required, and the last conditions $\delta<\sigma$ and (\ref{eq:i15}) are easily satisfied by taking $\sigma$ large enough.}
\bigskip

The values $\left(\eta,\alpha,\beta,\vartheta,\sigma\right)$ are now fixed.
For the remainder of the proof we introduce an important notation.
By
\[
x\lesssim y
\]
we mean that there is some constant $C$ such that $x\leq Cy$. This constant depends on $\,A_i,\,A'_i,\,a,\,b,\,s_0,\,m,\,\ell,\,\ell',\,g,\,g',\,\,s_1,\,\delta$ and our additional parameters $\left(\eta,\alpha,\beta,\vartheta,\sigma\right)$, but NOT on $\varepsilon$, nor on the regularity index $s\in [0,S]$ or the rank $n$ in any of the
sequences which will be introduced in the sequel.
For instance, the tame inequalities become:
\begin{align*}
\left\Vert DF_{\varepsilon}\left(u\right)h\right\Vert _{s} & \lesssim\left(\left\Vert u\right\Vert _{s+m}\left\Vert h\right\Vert _{s_{0}+m}+\left\Vert h\right\Vert _{s+m}\right)\\
\left\Vert L_{\varepsilon}\left(u\right)k\right\Vert _{s} & \lesssim\varepsilon^{-g}\left(\left\Vert u\right\Vert _{s+l}\left\Vert k\right\Vert _{s_{0}+l'}+\left\Vert k\right\Vert _{s+l'}\right)
\end{align*}

In the iteration process, we will need the following result:

\begin{lem}
\label{boundF} If the maps $F_\varepsilon$ form an $S$-tame differentiable family and $F_\varepsilon\left(  0\right)  =0$, then, for $u\in B_{s_0+m}\cap V_{s+m}$ and $s_{0}\leq s\leq S-m$, we have:%
\[
\left\Vert F_\varepsilon\left(  u\right)  \right\Vert _{s}\lesssim  \left\Vert u\right\Vert _{s+m}%
\]

\end{lem}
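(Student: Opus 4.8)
The plan is to integrate the tame direct estimate \eqref{tamedirectepsilon} along the segment joining $0$ to $u$, using $F_\varepsilon(0)=0$. Fix $u\in B_{s_0+m}\cap V_{s+m}$ and $s_0\le s\le S-m$. For $t\in[0,1]$ set $\gamma(t)=tu$; then $\gamma(t)\in B_{s_0+m}\cap V_{s+m}$ as well, since $\|tu\|_\sigma\le\|u\|_\sigma$ for every index $\sigma$ (because $t\le 1$). The G-differentiability hypothesis gives that $t\mapsto F_\varepsilon(tu)$ is differentiable with derivative $DF_\varepsilon(tu)u\in W_s$, and that this derivative depends continuously on $t$ — here one uses the remark in the text that a locally bounded G-differential forces $F_\varepsilon$ to be locally Lipschitz, and the direct estimate \eqref{tamedirectepsilon} provides precisely the local bound needed. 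Hence the fundamental theorem of calculus applies in the Banach space $W_s$:
\[
F_\varepsilon(u)=F_\varepsilon(u)-F_\varepsilon(0)=\int_0^1 DF_\varepsilon(tu)\,u\,dt\,.
\]

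Next I would take $\|\cdot\|_s'$-norms and push them inside the integral, then apply \eqref{tamedirectepsilon} with $h=u$ and base point $tu$:
\[
\|DF_\varepsilon(tu)u\|_s'\le a\bigl(\|u\|_{s+m}+\|tu\|_{s+m}\|u\|_{s_0+m}\bigr)\le a\bigl(\|u\|_{s+m}+\|u\|_{s+m}\cdot 1\bigr)=2a\,\|u\|_{s+m}\,,
\]
where I used $t\le1$ and the fact that $u\in B_{s_0+m}$ means $\|u\|_{s_0+m}\le1$, together with $\|u\|_{s_0+m}\le\|u\|_{s+m}$ from the scale property. Integrating over $t\in[0,1]$ gives $\|F_\varepsilon(u)\|_s'\le 2a\|u\|_{s+m}$, which is the claimed bound $\|F_\varepsilon(u)\|_s'\lesssim\|u\|_{s+m}$ with the implied constant $2a$, independent of $\varepsilon$, $s$, and $n$ as required by the conventions just introduced.

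The only genuinely delicate point is justifying the fundamental theorem of calculus in this weak differentiability setting: one needs $t\mapsto DF_\varepsilon(tu)u$ to be (say) continuous, or at least Riemann-integrable, as a map $[0,1]\to W_s$, since G-differentiability alone does not even guarantee continuity of $F_\varepsilon$. This is exactly where the local boundedness of $DF_\varepsilon$ — which \eqref{tamedirectepsilon} supplies on the ball $B_{s_0+m}\cap V_{s+m}$ — is essential: it makes $F_\varepsilon$ locally Lipschitz from $V_{s+m}$ to $W_s$ (as noted after the definition of G-differentiability), so the scalar function $t\mapsto\langle\varphi,F_\varepsilon(tu)\rangle$ is Lipschitz for every $\varphi\in (W_s)^*$, absolutely continuous, and its a.e. derivative $\langle\varphi,DF_\varepsilon(tu)u\rangle$ is bounded by $2a\|u\|_{s+m}\|\varphi\|$; integrating the scalar identity and then invoking Hahn–Banach recovers the vector identity above with the same bound. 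This weak (Gelfand/Pettis) version of the mean value inequality is all that is needed, and it avoids any continuity assumption on $t\mapsto DF_\varepsilon(tu)$ beyond the supplied bound.
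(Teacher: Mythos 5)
Your proposal is correct and follows essentially the same route as the paper: both bound $F_\varepsilon(u)=F_\varepsilon(u)-F_\varepsilon(0)$ by a mean-value argument along the segment $t\mapsto tu$, applying the tame direct estimate \eqref{tamedirectepsilon} with $h=u$ and using $\|u\|_{s_0+m}\le 1$. The paper phrases this by (formally) differentiating $\varphi(t)=\|F_\varepsilon(tu)\|_s'$ via a duality pairing, while your Hahn--Banach/weak-integral justification is simply a more careful rendering of the same step and is perfectly adequate.
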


\begin{proof}
Consider the function $\varphi\left(  t\right)  =\left\Vert F_\varepsilon \left(
tu\right)  \right\Vert _{s}$. Since $F_\varepsilon$ is G-differentiable, we have:
\begin{align*}
\varphi^{\prime}\left(  t\right)   &  =\left\langle DF_\varepsilon\left(  tu\right)
u,\frac{F_{\varepsilon}\left(  tu\right)  }{\left\Vert F_{\varepsilon}\left(
tu\right)  \right\Vert _{s}}\right\rangle _{s}\\
&  \leq a\left(  t\left\Vert u\right\Vert _{s_{0}+m}\left\Vert u\right\Vert
_{s+m}+\left\Vert u\right\Vert _{s+m}\right)
\end{align*}
and since $\varphi\left(  0\right)  =0$, we get the result.
\end{proof}

\subsection{Initialization}

\subsubsection{Defining appropriate norms.}

This subsection uses condition (\ref{eq:i1}) and the inequalities $s_1+\ell'<\delta<\sigma\,$, which, as already noted, follows from (\ref{eq:i2}). \smallskip

We are given $\left(\eta,\alpha,\vartheta,\delta,\sigma\right)$. We fix a large constant $K>1$, to be chosen later independently of $0<\varepsilon\leq 1$.\smallskip

We set
$\Lambda_{0}=(K\varepsilon^{-\eta})^{1/\alpha}$, $\Lambda_{1}:=(\Lambda_{0})^{\alpha}=K\varepsilon^{-\eta}$, 
$M_{0}:= (\Lambda_{0})^{\vartheta}=(K\varepsilon^{-\eta})^{\vartheta/\alpha}$ and $M_{1}:= (\Lambda_{1})^{\vartheta}=(K\varepsilon^{-\eta})^{\vartheta}$. We then have the inequalities $M_0<\Lambda_{0} < M_{1} <\Lambda_{1}\,.$\smallskip

Let $\,E_1:=E(\Lambda_1)\,,\Pi_1:=\Pi(\Lambda_1)\,,\;E'_1=E(M_1)\,$ and $\Pi'_i:=\Pi'(M_i)\,$ for $i=0,\,1\,.$\smallskip

We choose the following norms on $E_1$
, $E'_1$:
\begin{align*}
\left\Vert h\right\Vert _{\mathcal{N}_{1}}: & =\left\Vert h\right\Vert _{\delta}+\Lambda_{1}^{-\frac{\vartheta}{\alpha}\left(\sigma-\delta\right)}\left\Vert h\right\Vert _{\sigma}\\
\left\Vert k\right\Vert'_{\mathcal{N}_{1}} & :=\left\Vert k\right\Vert _{\delta}'+\Lambda_{1}^{-\frac{\vartheta}{\alpha}\left(\sigma-\delta\right)}\left\Vert k\right\Vert _{\sigma}'
\end{align*}

Endowed with these norms, $E_1$ and $E'_1$ are Banach spaces.
We shall use the notation $||| \, L\, |||_{_{\mathcal{N}_{1}}}$ for the operator norm of any linear continuous map $L$ from the Banach space $E'_1$ to a Banach space that can be either $E_1$ or $E'_1$.\medskip

The map $F_{\varepsilon}$ induces a map $f_{1}:\,B_{s_0+m}\cap E_1\rightarrow E'_1$
defined by 
\[
f_{1}\left(u\right):=\Pi'_1 F_{\varepsilon}\left(u\right)
\]
for $u\in B_{s_0+m}\cap E_1$. Note that $f_{1}\left(0\right)=0$. We will use the local surjection
theorem to show that the range of $f_{1}$ covers a neighbourhood
of $0$ in $E'_1$. We begin by showing that $Df_{1}$ has a
right inverse.

{\it Note that, if we assume that $DF$ is tame Galerkin right-invertible, we can take $M_1=\Lambda_1\geq\underline{\Lambda}$, and $Df_1$
is automatically right-invertible, with the tame estimate (\ref{galtamegalinverseepsilon}). So the next subsection is only necessary if we assume that $DF$ is tame right-invertible.}

\subsubsection{\label{subsec:-i1}$Df_{1}(u)$ has a right inverse for $\Vert u\Vert_{\mathcal{N}_1}\leq 1\,$.}

This subsection uses condition (\ref{eq:i2}). We recall it here for
the reader's convenience:
\[
\left(1-\vartheta\right)\left(\sigma-\delta\right)>\vartheta m+\max\left\{ \ell,\vartheta {\ell}'\right\} +\frac{g}{\eta}
\]
\begin{lem}
\label{lem:i1} For $K$ large enough and for all $u\in E_1$ with $\left\Vert u\right\Vert _{\mathcal{N}_{1}}\leq 1$:
\[
|||\, \Pi'_1DF_{\varepsilon}\left(u\right)\left(1-\Pi_1\right)L_{\varepsilon}\left(u\right)|||_{_{\mathcal{N}_{1}}}\leq\frac{1}{2}
\]
\end{lem}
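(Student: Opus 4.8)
The strategy is to write $\Pi'_1 DF_\varepsilon(u)(1-\Pi_1)L_\varepsilon(u)$ as a composition of three operators, bound each in the relevant norm, and show the product of the bounds is $\lesssim K^{-\theta}$ for some $\theta>0$ coming from the gap in \eqref{eq:i2}; then taking $K$ large gives the factor $\tfrac12$. Write $k\in E'_1$ with $\|k\|'_{\mathcal{N}_1}\leq 1$, set $w:=L_\varepsilon(u)k\in V_{s_0}$, then $(1-\Pi_1)w$, then apply $DF_\varepsilon(u)$, then $\Pi'_1$. The point is that the $(1-\Pi_1)$ factor buys a negative power of $\Lambda_1$ via \eqref{gain}, and we need that gain to beat the loss of derivatives in $DF_\varepsilon$ (controlled by $m$), the loss in $L_\varepsilon$ (controlled by $\ell,\ell'$), and the singular factor $\varepsilon^{-g}$, which by the choice $\Lambda_1=K\varepsilon^{-\eta}$ is a power $\Lambda_1^{g/\eta}$ of $\Lambda_1$ (up to a $K$-power). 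Summing exponents, the condition that the total exponent of $\Lambda_1$ be negative is exactly \eqref{eq:i2}.

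\textbf{Step 1: estimating $w=L_\varepsilon(u)k$ at a high index.} I would apply the tame inverse estimate \eqref{tameinverseepsilonbis}/\eqref{tameinverseepsilon} at regularity $s=\sigma-\ell'$ (one checks $\sigma-\ell'$ lies in the allowed range for $S$ large, using $\delta<\sigma$ and $\eta,\alpha$ fixed), together with $\|u\|_{\sigma-\ell'+\ell}\leq \|u\|_\sigma\lesssim \Lambda_1^{\frac{\vartheta}{\alpha}(\sigma-\delta)}$ (from $\|u\|_{\mathcal{N}_1}\leq1$) and $\|k\|'_{\sigma},\|k\|'_{s_0+\ell'},\|k\|'_{\delta}\lesssim \Lambda_1^{\frac{\vartheta}{\alpha}(\sigma-\delta)}$ (from $\|k\|'_{\mathcal{N}_1}\leq1$); since all these are comparable, I get
\[
\|w\|_{\sigma-\ell'}\lesssim \varepsilon^{-g}\,\Lambda_1^{\frac{\vartheta}{\alpha}(\sigma-\delta)}\,.
\]
Here one may need a second application at a lower index and interpolation \eqref{inter} to avoid the quadratic term $\|k\|'_{s_0+\ell'}\|u\|$ swamping things, but with everything bounded by the same power of $\Lambda_1$ this is harmless.

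\textbf{Step 2: gain from $(1-\Pi_1)$, then apply $DF_\varepsilon$ and $\Pi'_1$.} By \eqref{gain} applied in the $V$-scale between index $\delta+m$ and $\sigma-\ell'$ (valid since $\delta+m<\sigma-\ell'$ by \eqref{eq:i2}),
\[
\|(1-\Pi_1)w\|_{\delta+m}\lesssim \Lambda_1^{-(\sigma-\ell'-\delta-m)}\|w\|_{\sigma-\ell'}\lesssim \varepsilon^{-g}\,\Lambda_1^{\frac{\vartheta}{\alpha}(\sigma-\delta)-(\sigma-\delta)+m+\ell'}\,.
\]
Then \eqref{tamedirectepsilon} at $s=\delta$ gives $\|DF_\varepsilon(u)(1-\Pi_1)w\|'_\delta\lesssim \|(1-\Pi_1)w\|_{\delta+m}+\|u\|_{\delta+m}\|(1-\Pi_1)w\|_{s_0+m}$, and since $\|u\|_{\delta+m}\leq\|u\|_\sigma\lesssim\Lambda_1^{\frac{\vartheta}{\alpha}(\sigma-\delta)}$ while $\|(1-\Pi_1)w\|_{s_0+m}\lesssim\|(1-\Pi_1)w\|_{\delta+m}$ (as $s_0+m\leq\delta+m$), this is $\lesssim \Lambda_1^{\frac{\vartheta}{\alpha}(\sigma-\delta)}\|(1-\Pi_1)w\|_{\delta+m}$. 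Finally $\Pi'_1$ is bounded on $\|\cdot\|'_\delta$ by \eqref{loss} (with $t=s=\delta$), and for the $\|\cdot\|'_{\mathcal{N}_1}$-norm one also needs the $\sigma$-component: a parallel computation, using \eqref{loss} to pass $\Pi'_1$ and \eqref{tamedirectepsilon} at $s=\sigma$ applied to $(1-\Pi_1)w$ (whose high norm is controlled by $\|w\|_{\sigma-\ell'}$ directly, since $(1-\Pi_1)$ is a bounded projector), shows the $\Lambda_1^{-\frac{\vartheta}{\alpha}(\sigma-\delta)}\|\cdot\|'_\sigma$-contribution is of the same order. Collecting, $\|\Pi'_1DF_\varepsilon(u)(1-\Pi_1)L_\varepsilon(u)k\|'_{\mathcal{N}_1}\lesssim \varepsilon^{-g}\Lambda_1^{-(1-\vartheta)(\sigma-\delta)+m+\ell'}$; using $\varepsilon^{-g}=(K^{-1}\Lambda_1)^{g/\eta}\lesssim \Lambda_1^{g/\eta}$ and $\Lambda_1=K\varepsilon^{-\eta}\geq K$, the exponent of $\Lambda_1$ is $-\big[(1-\vartheta)(\sigma-\delta)-\vartheta m-\max\{\ell,\vartheta\ell'\}-g/\eta\big]<0$ by \eqref{eq:i2}, so the whole quantity is $\lesssim K^{-\theta}$ for a fixed $\theta>0$, whence $\leq\tfrac12$ for $K$ large.

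\textbf{Main obstacle.} The delicate bookkeeping is making sure that \emph{every} intermediate regularity index genuinely lies in the interval where the tame estimates are licensed — this is where ``$S$ large enough'' is silently used, and where the factor $\vartheta$ rather than $1$ in front of $\ell'$ in \eqref{eq:i2} comes from (the inverse estimate is applied at index $\sigma-\ell'$, but in the Galerkin case one could instead go up only to $\sigma$ and the relevant loss would be just $\ell$, explaining the $\max\{\ell,\vartheta\ell'\}$). The second subtlety is handling the bilinear ``$\|u\|\cdot\|h\|$'' tails in \eqref{tamedirectepsilon} and \eqref{tameinverseepsilon} without generating a genuinely quadratic (in $\Lambda_1$) term; the saving grace, exploited throughout, is that on the ball $\|u\|_{\mathcal{N}_1}\leq1$ the high norm $\|u\|_\sigma$ is itself only $\lesssim\Lambda_1^{\frac{\vartheta}{\alpha}(\sigma-\delta)}$, the \emph{same} power that multiplies $k$, so these products contribute a fixed extra power $\Lambda_1^{\frac{\vartheta}{\alpha}(\sigma-\delta)}$ that has already been accounted for in the $\mathcal{N}_1$-norm normalization and does not spoil the negativity of the final exponent. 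Verifying this cancellation carefully is the real content of the lemma.
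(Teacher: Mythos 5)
Your overall strategy is the same as the paper's (peel off $(1-\Pi_1)$ to gain negative powers of $\Lambda_1$, convert $\varepsilon^{-g}$ into a power $\Lambda_1^{g/\eta}$ via $\Lambda_1=K\varepsilon^{-\eta}$, invoke \eqref{eq:i2}, then take $K$ large), but the bookkeeping in your Step 2 has a genuine gap. You apply the direct estimate \eqref{tamedirectepsilon} at level $s=\delta$ and bound the bilinear tail by $\|u\|_{\delta+m}\,\|(1-\Pi_1)w\|_{s_0+m}\lesssim \Lambda_1^{\frac{\vartheta}{\alpha}(\sigma-\delta)}\|(1-\Pi_1)w\|_{\delta+m}$. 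Since your Step 1 bound for $\|w\|_{\sigma-\ell'}$ already carries one factor $\Lambda_1^{\frac{\vartheta}{\alpha}(\sigma-\delta)}$, your own intermediate estimates give for the $\delta$-component an exponent $\frac{2\vartheta}{\alpha}(\sigma-\delta)-(\sigma-\delta)+m+\ell'+g/\eta$, not the $-(1-\vartheta)(\sigma-\delta)+m+\ell'+g/\eta$ you assert when ``collecting''. With the parameters of Lemma \ref{compatibility} ($\alpha=\vartheta^{-2}$, $\vartheta$ close to $1$) one has $\frac{2\vartheta}{\alpha}>1$, so this exponent is positive and your bound blows up as $\Lambda_1\to\infty$ instead of being $\lesssim K^{-\theta}$; the cancellation invoked in your ``Main obstacle'' paragraph is not available for the $\delta$-component, which enters the $\mathcal{N}_1$-norm with weight $1$. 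The paper avoids the double factor by applying \eqref{tamedirectepsilon} at $s=\delta-m$, so the quadratic tail is $\|h\|_{s_0+m}\|u\|_{\delta}$ with $\|u\|_\delta\leq 1$, and then climbing back up to the $\delta$- and $\sigma$-norms of the output using that it lies in $E'(M_1)$, which costs only $M_1^{m}=\Lambda_1^{\vartheta m}$ and $M_1^{\sigma-\delta}$. This is exactly where $\vartheta<1$ (the rectangular Galerkin structure) and the coefficients $\vartheta m$, $\vartheta\ell'$ in \eqref{eq:i2} enter; your argument never uses this mechanism, and correspondingly your target exponent would require a condition ($m+\ell'$ in place of $\vartheta m+\max\{\ell,\vartheta\ell'\}$) that is not what Lemma \ref{compatibility} guarantees.

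Two further points. In Step 1 the inequality $\|u\|_{\sigma-\ell'+\ell}\leq\|u\|_{\sigma}$ is false when $\ell>\ell'$ (precisely the case $\ell=2$, $\ell'=0$ of the application in Section 4); there you must use $u\in E(\Lambda_1)$ and pay $\Lambda_1^{\ell-\ell'}$, which is why the term $\max\{\ell,\vartheta\ell'\}$, absent from your final exponent, appears in \eqref{eq:i2}. Also, the $\sigma$-component of the $\mathcal{N}_1$-norm, which in the paper's computation is the dominant contribution (handled via $\|\Pi'_1 DF_{\varepsilon}(u)h\|'_{\sigma}\lesssim M_1^{\sigma-\delta}\|\Pi'_1 DF_{\varepsilon}(u)h\|'_{\delta}$, again using that the output is in $E'(M_1)$), is only asserted in your proposal to be ``of the same order''; it needs the same smoothing device to come out right.
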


\begin{proof}
From $\left\Vert u\right\Vert _{\mathcal{N}_{1}}\leq1$, it follows
that $\left\Vert u\right\Vert _{\delta}\leq1$, and since $\delta>s_{0}+\max\left\{ \ell,m\right\} +\ell'$,
the tame estimates hold at $u$. 

Take any $k\in E_1'$ and set $h=\left(1-\Pi_1\right)L_{\varepsilon}\left(u\right)k$.

We have $\left\Vert h\right\Vert _{\delta}\lesssim \Lambda_{1}^{\delta-\sigma}\left\Vert L_{\varepsilon}\left(u\right)k\right\Vert _{\sigma}$,
and:
\begin{align*}
\left\Vert \Pi_1'DF_{\varepsilon}\left(u\right)h\right\Vert _{\delta-m}' & \lesssim\left\Vert h\right\Vert _{s_{0}+m}\left\Vert u\right\Vert _{\delta}+\left\Vert h\right\Vert _{\delta}\lesssim\left\Vert h\right\Vert _{\delta}\\
\left\Vert \Pi_1'DF_{\varepsilon}\left(u\right)h\right\Vert _{\delta}' & \lesssim M_{1}^{m}\left\Vert \Pi_1'DF_{\varepsilon}\left(u\right)h\right\Vert _{\delta-m}\lesssim M_{1}^{m}\left\Vert h\right\Vert _{\delta}\,.
\end{align*}

Hence:
\[
\left\Vert \Pi_1'DF_{\varepsilon}\left(u\right)h\right\Vert _{\delta}'\lesssim M_{1}^{m}\Lambda_{1}^{\delta-\sigma}\left\Vert L_{\varepsilon}\left(u\right)k\right\Vert _{\sigma}\,.
\]

Writing $\left\Vert \Pi_1'DF_{\varepsilon}\left(u\right)h\right\Vert _{\sigma}'\lesssim M_{1}^{\sigma-\delta}\left\Vert \Pi_1'DF_{\varepsilon}\left(u\right)h\right\Vert _{\delta}$ we
finally get:
\begin{equation}
\left\Vert \Pi_1'DF_{\varepsilon}\left(u\right)h\right\Vert _{\mathcal{N}_{1}}'\lesssim M_{1}^{m}\Lambda_{1}^{\delta-\sigma}\left(1+\Lambda_{1}^{-\frac{\vartheta}{\alpha}\left(\sigma-\delta\right)}M_{1}^{\sigma-\delta}\right)\left\Vert L_{\varepsilon}\left(u\right)k\right\Vert _{\sigma}\,.
\label{eq:i20}
\end{equation}

We now have to estimate $\left\Vert L_{\varepsilon}\left(u\right)k\right\Vert _{\sigma}$.
By the tame estimates, we have: 
\begin{align*}
\left\Vert L_{\varepsilon}\left(u\right)k\right\Vert _{\sigma} & \lesssim\varepsilon^{-g}\left(\left\Vert k\right\Vert _{\sigma+\ell'}'+\left\Vert u\right\Vert _{\sigma+\ell}\left\Vert k\right\Vert _{s_{0}+\ell'}'\right)\\
 & \lesssim\varepsilon^{-g}\left(M_{1}^{\ell'}\left\Vert k\right\Vert _{\sigma}'+\Lambda_{1}^{\ell}\left\Vert u\right\Vert _{\sigma}\left\Vert k\right\Vert _{\delta}'\right)
\end{align*}

Since $\left\Vert u\right\Vert _{\mathcal{N}_{1}}\leq1$, we have
$\left\Vert u\right\Vert _{\sigma}\leq \Lambda_{1}^{\frac{\vartheta}{\alpha}\left(\sigma-\delta\right)}\,$.
Substituting, we get:
\begin{align}
\left\Vert L_{\varepsilon}\left(u\right)k\right\Vert _{\sigma} & \lesssim\varepsilon^{-g}\left(M_{1}^{\ell'}\left\Vert k\right\Vert _{\sigma}'+\Lambda_{1}^{\frac{\vartheta}{\alpha}\left(\sigma-\delta\right)+\ell}\left\Vert k\right\Vert _{\delta}'\right)\nonumber \\
 & \lesssim\varepsilon^{-g}\Lambda_{1}^{\frac{\vartheta}{\alpha}\left(\sigma-\delta\right)}\left ( M_{1}^{\ell'}+\Lambda_{1}^{\ell}\right) \left\Vert k\right\Vert _{\mathcal{N}_{1}}'\label{eq:i21}
\end{align}

Putting (\ref{eq:i20}) and (\ref{eq:i21}) together, we get:
\[
\text{\ensuremath{\left\Vert \Pi_1'DF_{\varepsilon}\left(u\right)h\right\Vert _{\mathcal{N}_{1}}'\lesssim\varepsilon^{-g}M_{1}^{m}\Lambda_{1}^{\delta-\sigma}\left(\Lambda_{1}^{\frac{\vartheta}{\alpha}\left(\sigma-\delta\right)}+M_{1}^{\sigma-\delta}\right)\left ( M_{1}^{\ell'}+\Lambda_{1}^{\ell}\right) \left\Vert k\right\Vert _{\mathcal{N}_{1}}'}}
\]

Since $\alpha>1$, we have $\Lambda_{1}^{\frac{\vartheta}{\alpha}\left(\sigma-\delta\right)}\leq \Lambda_{1}^{\vartheta\left(\sigma-\delta\right)}= M_{1}^{\sigma-\delta}$,
so that:

\begin{align*}
\left\Vert \Pi_1'DF_{\varepsilon}\left(u\right)h\right\Vert _{\mathcal{N}_{1}}' & \lesssim\varepsilon^{-g}M_{1}^{m}\Lambda_{1}^{\delta-\sigma}M_{1}^{\sigma-\delta}\left ( M_{1}^{\ell'}+\Lambda_{1}^{\ell}\right) \left\Vert k\right\Vert _{\mathcal{N}_{1}}'\\
 & \lesssim\varepsilon^{-g}\Lambda_{1}^{\vartheta m-\left(1-\vartheta\right)\left(\sigma-\delta\right)+\max\{ \ell,\vartheta \ell'\} }\left\Vert k\right\Vert _{\mathcal{N}_{1}}'
\end{align*}

Since $\Lambda_{1}=K \varepsilon^{-\eta}$,
the inequality becomes:
\[
\left\Vert \Pi_1'DF_{\varepsilon}\left(u\right)h\right\Vert _{\mathcal{N}_{1}}'\lesssim K^{-C_0}\varepsilon^{-g+\eta C_0 }\left\Vert k\right\Vert _{\mathcal{N}_{1}}'
\]
with $C_0:=\left(1-\vartheta\right)\left(\sigma-\delta\right)-\vartheta m-\max\{\ell,\vartheta \ell'\}$.

By condition (\ref{eq:i2}), the exponent $C_0$ is larger than $g/\eta$, and the proof follows by choosing $K$ large enough independently of $0<\varepsilon\leq 1$.
\end{proof}
Introduce the map $\mathcal{L}_{1}\left(u\right)=\Pi_1 L_{\varepsilon}\left(u\right)_{\vert_{E'_1}}$.
Since $DF_{\varepsilon}\left(u\right)L_{\varepsilon}\left(u\right)=1\text{, }$it
follows from Lemma \ref{lem:i1} that, for $k\in E_1'$, $u\in E_1$ and $\left\Vert u\right\Vert _{\mathcal{N}_{1}}\leq 1\,$, we have:
\[
\left\Vert k-Df_{1}\left(u\right)\mathcal{L}_{1}\left(u\right)k\right\Vert'_{\mathcal{N}_{1}}\leq\frac{1}{2}\left\Vert k\right\Vert'_{\mathcal{N}_{1}}
\]
 This implies that the Neumann series $\sum_{i\geq 0} \left(I_{E'_1}-Df_{1}\left(u\right)\mathcal{L}_{1}\left(u\right)\right)^i$ converges in operator norm. Its sum is $S_1(u)=\left(Df_{1}(u)\mathcal{L}_{1}(u)\right)^{-1}$ and it has operator norm at most $2$.

Then $T_{1}\left(u\right):=\mathcal{L}_1(u)S_1(u)$ is a right
inverse of $Df_{1}\left(u\right)$ and $|||\, T_{1}\left(u\right)|||_{_{\mathcal{N}_{1}}}\leq 2\,|||\, \mathcal{L}_{1}\left(u\right)|||_{_{\mathcal{N}_{1}}}$.
By the tame estimates, if $u\in E_1\,,$ $\left\Vert u\right\Vert _{\mathcal{N}_{1}}\leq 1$ and $k\in E_1'$, we have:

\begin{align*}
\left\Vert \mathcal{L}_{1}\left(u\right)k\right\Vert _{\delta}\lesssim\left\Vert L_{\varepsilon}\left(u\right)k\right\Vert _{\delta} & \lesssim\varepsilon^{-g}\left(\left\Vert k\right\Vert _{\delta+\ell'}'+\left\Vert u\right\Vert _{\delta+\ell}\left\Vert k\right\Vert _{s_{0}+\ell'}'\right)\\
 & \lesssim\varepsilon^{-g}\left(M_{1}^{\ell'}+\Lambda_{1}^{\ell}\right)\left\Vert k\right\Vert _{\delta}'
\end{align*}

Combining with (\ref{eq:i21}), we find:
\[
\sup_{\Vert u\Vert_{_{\mathcal{N}_1}\leq 1}} |||\,  T_{1}\left(u\right)|||_{_{\mathcal{N}_{1}}}\lesssim\varepsilon^{-g}\left(M_{1}^{\ell'}+\Lambda_{1}^{\ell}\right) = m_1
\]

\subsubsection{Local inversion of $f_{1}$.}
\leavevmode\par

\noindent
Applying Theorem 5, we find that if $\left\Vert \Pi'_0 v\right\Vert'_{\mathcal{N}_{1}} < 1/m_1$,
then equation $f_{1}\left(u\right)=\Pi'_0 v$ has a solution $u_1 \in E_1$ with $\left\Vert u_1\right\Vert _{\mathcal{N}_{1}}\leq 1$ and $\left\Vert u_1\right\Vert _{\mathcal{N}_{1}}\leq m_1 \left\Vert \Pi'_0 v\right\Vert'_{\mathcal{N}_{1}}$. 

Note that $\left\Vert \Pi'_0 v\right\Vert _{\sigma}'\lesssim M_{0}^{\sigma-\delta}\left\Vert \Pi'_0 v\right\Vert _{\delta}'\lesssim \Lambda_{1}^{\frac{\vartheta}{\alpha}\left(\sigma-\delta\right)}\left\Vert \Pi'_0 v\right\Vert _{\delta}'$.
It follows that 
\[
\left\Vert \Pi'_0 v\right\Vert' _{\mathcal{N}_{1}}=\left\Vert \Pi'_0 v\right\Vert _{\delta}'+\Lambda_{1}^{-\frac{\vartheta}{\alpha}\left(\sigma-\delta\right)}\left\Vert \Pi'_0 v\right\Vert _{\sigma}'\lesssim\left\Vert \Pi'_0 v\right\Vert _{\delta}
\]
\medskip

Assume from now on: 
\begin{equation}
\left\Vert v\right\Vert _{\delta}'\lesssim\varepsilon^{g}\left(M_{1}^{\ell'}+\Lambda_{1}^{\ell}\right)^{-1}\label{v_petit}
\end{equation}

Then $\left\Vert \Pi'_0 v\right\Vert'_{\mathcal{N}_{1}}\lesssim m_1^{-1}$,
and Theorem 5 applies. The estimate on $u_1$ implies:
\begin{equation}
\left\Vert u_1\right\Vert _{\delta} \lesssim\text{\ensuremath{\varepsilon^{-g}\left(M_{1}^{\ell'}+\Lambda_{1}^{\ell}\right)\left\Vert v\right\Vert _{\delta}'}}\leq 1\label{u_1_delta}
\end{equation}

It also implies an estimate in higher norm:
\begin{equation}
\left\Vert u_1\right\Vert _{\sigma} \lesssim\varepsilon^{-g}\Lambda_{1}^{\frac{\vartheta}{\alpha}\left(\sigma-\delta\right)}\left(M_{1}^{\ell'}+\Lambda_{1}^{\ell}\right)\left\Vert v\right\Vert'_{\delta} \lesssim\, \Lambda_{1}^{\frac{\vartheta}{\alpha}\left(\sigma-\delta\right)}
\;.\label{u_1_sigma}
\end{equation}

\subsection{Induction.}

\subsubsection{Finding uniform bounds.}

In addition to $\left(\alpha,\vartheta,\delta,\varepsilon,\eta\right)$
we are given $\beta$ satisfying relations (\ref{eq:i4}) and (\ref{eq:i3}) .
We recall them here for the reader's convenience. With $s_{1}\geq s_{0}+\max\left\{ m,\ell\right\} $
and $\delta>s_{1}+\ell'$ ,
\begin{align*}
\sigma & >\alpha\beta+s_{1}\\
\beta & >\frac{1}{\alpha}\left(\sigma-\delta\right)
\end{align*}

We also inherit $\Lambda_{1}=K\varepsilon^{-\eta}$ and $u_1$ from the preceding section. Combining (\ref{eq:i3}) and (\ref{u_1_sigma}), we immediately obtain the estimate
\begin{equation}
\left\Vert u_1\right\Vert _{\sigma} \lesssim\varepsilon^{-g}\Lambda_{1}^{\beta}\left(M_{1}^{\ell'}+\Lambda_{1}^{\ell}\right)\left\Vert v\right\Vert'_{\delta} \,\lesssim \,\Lambda_{1}^{\beta}
\;.\label{u_1_sigma_beta}
\end{equation}

Consider
the sequences of integers $M_{n}$ and $\Lambda_{n}$, $n\geq1\text{, }$defined
by $\Lambda_{n}:=\Lambda_{1}^{\text{\ensuremath{\alpha^{n-1}}}}$
and $M_{n}:=\Lambda_{n}^{\vartheta}$. \medskip
Let $\Pi_n:=\Pi(\Lambda_n)\,,\;\Pi'_n:=\Pi'(M_n)\,,\;E_n:=E(\Lambda_n)\,,\;E'_n:=E'(M_n)\,.$\medskip

We will
construct a sequence $u_{n}\in E_{n},\,n\geq1,$ starting from the
initial point $u_{1}$ we found in the preceding section. 
For all $n\geq2$ the remaining points should satisfy the following
conditions: 
\begin{align}
\Pi_n'F_{\varepsilon}\left(u_{n}\right) & =\Pi_{n-1}'v\label{eq:i17}\\
\left\Vert u_{n}-u_{n-1}\right\Vert _{s_{0}} & \leq\varepsilon^{-g}\Lambda_{n-1}^{\alpha\beta-\sigma+s_{0}}\left(M_{1}^{\ell'}+\Lambda_{1}^{\ell}\right)\left\Vert v\right\Vert _{\delta}'\label{eq:i18}\\
\left\Vert u_{n}-u_{n-1}\right\Vert _{\sigma} & \leq\varepsilon^{-g}\Lambda_{n-1}^{\alpha\beta}\left(M_{1}^{\ell'}+\Lambda_{1}^{\ell}\right)\left\Vert v\right\Vert _{\delta}'\label{eq:i19}
\end{align}

We proceed by induction. Suppose we have found $u_{2},...,u_{n-1}$
satisfying these conditions. We want to construct $u_{n}.$ 
\begin{lem}
\label{lem:i5}Let us impose $K\geq 2$. For all $t$ with $s_{0}\leq t <\sigma-\alpha\beta$,
and all $i$ with $2\leq i\leq n-1$, we have:
\[
\sum_{i=2}^{n-1}\left\Vert u_{i}-u_{i-1}\right\Vert _{t}\leq \varepsilon^{-g}\left(M_{1}^{\ell'}+\Lambda_{1}^{\ell}\right)\Sigma\left(t\right)\left\Vert v\right\Vert _{\delta}'
\]
where $\Sigma\left(t\right)$ is finite and independent of $n\,,\,\varepsilon$.
\end{lem}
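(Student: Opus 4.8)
The plan is to control each summand $\left\Vert u_i - u_{i-1}\right\Vert_t$ by interpolating (\ref{inter}) between the $s_0$-bound (\ref{eq:i18}) and the $\sigma$-bound (\ref{eq:i19}), and then to recognize the resulting sum over $i$ as a convergent geometric-type series in $\Lambda_1^{\alpha^{i-1}}$.

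First I would fix $t\in[s_0,\sigma-\alpha\beta)$ and apply the interpolation inequality (\ref{inter}) with $t_1=s_0$, $t_2=\sigma$ to each difference $u_i-u_{i-1}$, which lies in $E_i\subset V_\sigma$. Writing $\theta=\frac{\sigma-t}{\sigma-s_0}\in(0,1]$, this gives
\[
\left\Vert u_i-u_{i-1}\right\Vert_t \;\lesssim\; \left\Vert u_i-u_{i-1}\right\Vert_{s_0}^{\theta}\,\left\Vert u_i-u_{i-1}\right\Vert_{\sigma}^{1-\theta}.
\]
Substituting the inductive bounds (\ref{eq:i18}) and (\ref{eq:i19}) (valid for indices $2,\dots,n-1$ by hypothesis), the factor $\varepsilon^{-g}(M_1^{\ell'}+\Lambda_1^{\ell})\left\Vert v\right\Vert'_\delta$ comes out with exponent $\theta+(1-\theta)=1$, while the powers of $\Lambda_{i-1}$ combine to
\[
\Lambda_{i-1}^{\theta(\alpha\beta-\sigma+s_0)+(1-\theta)\alpha\beta}\;=\;\Lambda_{i-1}^{\alpha\beta-\theta(\sigma-s_0)}\;=\;\Lambda_{i-1}^{\alpha\beta-(\sigma-t)}.
\]
Hence, up to the common prefactor,
\[
\sum_{i=2}^{n-1}\left\Vert u_i-u_{i-1}\right\Vert_t \;\lesssim\; \varepsilon^{-g}\bigl(M_1^{\ell'}+\Lambda_1^{\ell}\bigr)\left\Vert v\right\Vert'_\delta\,\sum_{i=2}^{n-1}\Lambda_{i-1}^{\alpha\beta-(\sigma-t)}.
\]

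The remaining point is that the exponent $\mu:=\alpha\beta-(\sigma-t)$ is \emph{negative} precisely because $t<\sigma-\alpha\beta$; this is where that constraint on $t$ is used. Since $\Lambda_{i-1}=\Lambda_1^{\alpha^{i-2}}$ and $\Lambda_1=K\varepsilon^{-\eta}\geq K\geq 2$ (we impose $K\geq 2$ as in the statement), the terms $\Lambda_1^{\mu\alpha^{i-2}}$ decay at least geometrically in $i$ — indeed faster, since the exponents $\mu\alpha^{i-2}$ tend to $-\infty$ — so the series converges and is bounded by a constant
\[
\Sigma(t):=\sum_{j\geq 0}2^{\mu\alpha^{j}}<\infty,
\]
which is independent of $n$ and of $\varepsilon$ (because $\Lambda_1\geq 2$ uniformly in $\varepsilon$, so each term is dominated by the corresponding term of the $\Lambda_1=2$ series). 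This yields the claimed inequality. The only genuinely delicate point — and the one I would be most careful about — is making the bound on $\Sigma(t)$ truly uniform in $\varepsilon$ and $n$: one must observe that larger $\Lambda_1$ only makes the (negative-exponent) terms smaller, so replacing $\Lambda_1$ by its lower bound $2$ gives a legitimate $\varepsilon$-independent majorant; the dependence on $t$ through $\mu$ is harmless since $t$ ranges over a fixed set and $\Sigma$ is monotone in $\mu$.
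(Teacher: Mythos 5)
Your proof is correct and follows essentially the same route as the paper: interpolate each difference between the $s_0$-bound (\ref{eq:i18}) and the $\sigma$-bound (\ref{eq:i19}) to get the factor $\Lambda_{i-1}^{\alpha\beta-\sigma+t}$ with negative exponent, then bound the resulting series uniformly in $n$ and $\varepsilon$ by replacing $\Lambda_1=K\varepsilon^{-\eta}\geq 2$ with $2$. The interpolation constant $A_3$ that your $\lesssim$ hides is simply absorbed into $\Sigma(t)$, exactly as in the paper.
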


\begin{proof}
By the interpolation formula,
$$\left\Vert u_{i}-u_{i-1}\right\Vert _{t}\leq\varepsilon^{-g}\,\Lambda_{i-1}^{\alpha\beta-\sigma+t}\left(M_{1}^{\ell'}+\Lambda_{1}^{\ell}\right)\left\Vert v\right\Vert _{\delta}'$$
for all $2\leq i\leq n\,$. Since $\Lambda_{1}=K\varepsilon^{-\eta}\geq 2$, we have:
\begin{align*}
\sum_{i=2}^{n-1}\left\Vert u_{i}-u_{i-1}\right\Vert _{t} & \leq\varepsilon^{-g}\left(M_{1}^{\ell'}+\Lambda_{1}^{\ell}\right)\sum_{i=2}^{\infty}\Lambda_{i-1}^{\alpha\beta-\sigma+t}\left\Vert v\right\Vert _{\delta}'\\
 & \leq\varepsilon^{-g} \left(M_{1}^{\ell'}+\Lambda_{1}^{\ell}\right)\sum_{j=0}^{\infty} 2^{\alpha^{j(\alpha\beta-\sigma+t)}}\left\Vert v\right\Vert _{\delta}'
\end{align*}
 
\end{proof}
By (\ref{eq:i4}) we can take $t=s_{1}$, and we find a uniform bound
for $u_{n-1}$ in the $s_{1}$-norm,
namely:
\begin{align*}
\left\Vert u_{n-1}\right\Vert _{s_{1} } & \leq\left\Vert u_{1}\right\Vert _{\delta}+\sum_{i=2}^{n-1}\left\Vert u_{i}-u_{i-1}\right\Vert _{s_{1} }\\
 & \lesssim\varepsilon^{-g}\left(M_{1}^{\ell'}+\Lambda_{1}^{\ell}\right)\left(1+\Sigma(s_{1})\right)\left\Vert v\right\Vert _{\delta}'\\
 & \lesssim\varepsilon^{-g}\left(M_{1}^{\ell'}+\Lambda_{1}^{\ell}\right)\left\Vert v\right\Vert _{\delta}'
\end{align*}

In particular, we will have $\left\Vert u_{n-1}\right\Vert _{s_{1} }\leq 1$
if $\left\Vert v\right\Vert _{\delta}'\lesssim\varepsilon^{g}$$\left(M_{1}^{\ell'}+\Lambda_{1}^{\ell}\right)^{-1}$,
so the tame estimates hold at $u_{n-1}.$ 

Similarly, if $\left\Vert v\right\Vert _{\delta}'\lesssim\varepsilon^{g}$$\left(M_{1}^{\ell'}+\Lambda_{1}^{\ell}\right)^{-1}$
we find a uniform bound in the $\sigma$-norm. We have:

$$\left\Vert u_{n-1}\right\Vert _{\sigma} \leq\left\Vert u_{1}\right\Vert _{\sigma}+\sum_{i=2}^{n-1}\left\Vert u_{i}-u_{i-1}\right\Vert _{\sigma}\nonumber$$
and
$$\sum_{i=2}^{n-1}\left\Vert u_{i}-u_{i-1}\right\Vert _{\sigma}\lesssim\text{\ensuremath{\varepsilon^{-g}\left(M_{1}^{\ell'}+\Lambda_{1}^{\ell}\right)\sum_{i=1}^{n-1}\Lambda_{i}^{\beta}\left\Vert v\right\Vert _{\delta}'}}\lesssim\varepsilon^{-g}\left(M_{1}^{\ell'}+\Lambda_{1}^{\ell}\right)\Lambda_{n-1}^{\beta}\left\Vert v\right\Vert _{\delta}'$$
so, combining this with (\ref{u_1_sigma_beta}), we get:
\begin{equation}
\left\Vert u_{n-1}\right\Vert _{\sigma}\lesssim\varepsilon^{-g}\Lambda_{n-1}^{\beta}\left(M_{1}^{\ell'}+\Lambda_{1}^{\ell}\right)\left\Vert v\right\Vert _{\delta}'\,\lesssim\, \Lambda_{n-1}^{\beta}\,.\label{eq:sigma-estim}
\end{equation}

\subsubsection{Setting up the induction step.}
\leavevmode\par

\noindent
Suppose, as above, that $\left\Vert v\right\Vert _{\delta}'\lesssim\varepsilon^{g}\left(M_{1}^{\ell'}+\Lambda_{1}^{\ell}\right)^{-1}$and
that $u_{2},...,u_{n-1}$ have been found. We have seen that $\left\Vert u_{n-1}\right\Vert_{s_1} \leq 1$,
so that the tame estimates hold at $u_{n-1}$, and we also have $\left\Vert u_{n-1}\right\Vert _{\sigma}\lesssim \Lambda_{n-1}^{\beta}$
. We want to find $u_{n}$ satisfying (\ref{eq:i17}), (\ref{eq:i18})
and (\ref{eq:i19}). Since $\Pi'_{n-1} F_{\varepsilon}\left(u_{n}\right)=\Pi'_{n-2} v$
, we rewrite the latter equation as follows:
\begin{equation}
\Pi'_{n} \left(F_{\varepsilon}\left(u_{n}\right)-F_{\varepsilon}\left(u_{n-1}\right)\right)+\left(\Pi'_{n}-\Pi'_{n-1}\right) F_{\varepsilon}\left(u_{n-1}\right)=\left(\Pi'_{n-1}-\Pi'_{n-2}\right)v\label{eq:i22}
\end{equation}

Define a map $f_{n}:\,E_{n}\rightarrow E'_{n}$ with $f_n\left(0\right)=0$
by:
\[
f_{n}\left(z\right)=\Pi'_{n}\left(F_{\varepsilon}\left(u_{n-1}+z\right)-F_{\varepsilon}\left(u_{n-1}\right)\right)
\]

Equation (\ref{eq:i22}) can be rewritten as follows:
\begin{align}
f_{n}\left(z\right) & =\Delta_{n}v+e_{n}\label{eq:i23}\\
\Delta_{n}v & =\Pi'_{n-1}\left(1-\Pi'_{n-2}\right)v\label{eq:i24}\\
e_{n} & =-\Pi'_{n}\left(1-\Pi'_{n-1}\right)F_{\varepsilon}\left(u_{n-1}\right)\label{eq:i25}
\end{align}

We choose the following norms on $E_{n}$ and $E'_{n}$:
\begin{align*}
\left\Vert x\right\Vert _{\mathcal{N}_{n}} & =\left\Vert x\right\Vert _{s_{0}}+\Lambda_{n-1}^{-\sigma+s_0}\left\Vert x\right\Vert _{\sigma}\\
\left\Vert y\right\Vert _{\mathcal{N}_{n}}' & =\left\Vert y\right\Vert _{s_{0}}'+\Lambda_{n-1}^{-\sigma+s_{0}}\left\Vert y\right\Vert _{\sigma}'
\end{align*}

Endowed with these norms, $E_n$ and $E'_n$ are Banach spaces.
We shall use the notation $||| \, L\, |||_{_{\mathcal{N}_{n}}}$ for the operator norm of any linear continuous map $L$ from the Banach space $E'_n$ to a Banach space that can be either $E_n$ or $E'_n$.\medskip

\begin{lem}
\label{lem:i2}If $0\leq t\leq\sigma-s_{0}$, then:
\begin{align*}
\left\Vert x\right\Vert _{s_{0}+t} & \lesssim \Lambda_{n-1}^{t}\left\Vert x\right\Vert _{\mathcal{N}_{n}}\\
\left\Vert y\right\Vert'_{s_{0}+t} & \lesssim \Lambda_{n-1}^{t}\left\Vert y\right\Vert _{\mathcal{N}_{n}}'
\end{align*}
\end{lem}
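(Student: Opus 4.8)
The plan is to obtain both inequalities from the interpolation estimate (\ref{inter}) between the exponents $s_0$ and $\sigma$, followed by the elementary bound that the geometric mean of two nonnegative numbers is dominated by their sum. Since the two statements are formally identical — one for $x\in E_n$ with the norms of the scale $(V_s)$, the other for $y\in E'_n$ with the primed norms of $(W_s)$ — I would prove only the first, the second following verbatim after replacing the constant $A_3$ by $A'_3$ and $\mathcal N_n$ by its primed analogue.

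First I would dispose of the case $t=0$, which is trivial because then $\Lambda_{n-1}^{t}\Vert x\Vert_{\mathcal N_n}\ge\Vert x\Vert_{s_0}$. For $0<t\le\sigma-s_0$ — note that $\sigma>s_1\ge s_0$ by (\ref{eq:i4}), so $\sigma-s_0>0$ — set $\theta=t/(\sigma-s_0)\in(0,1]$. Since $s_0\le s_0+t\le\sigma\le S$, the last inequality being part of the standing assumption that $S$ is large enough, the interpolation inequality (\ref{inter}) applies with $t_1=s_0$, $s=s_0+t$, $t_2=\sigma$ and gives
\[
\Vert x\Vert_{s_0+t}\ \lesssim\ \Vert x\Vert_{s_0}^{1-\theta}\,\Vert x\Vert_\sigma^{\theta}\,.
\]

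Next I would pull out the relevant power of $\Lambda_{n-1}$: writing $\Vert x\Vert_\sigma=\Lambda_{n-1}^{\sigma-s_0}\bigl(\Lambda_{n-1}^{-(\sigma-s_0)}\Vert x\Vert_\sigma\bigr)$ and using $(\sigma-s_0)\theta=t$, the bound becomes
\[
\Vert x\Vert_{s_0+t}\ \lesssim\ \Lambda_{n-1}^{t}\,\Vert x\Vert_{s_0}^{1-\theta}\bigl(\Lambda_{n-1}^{-(\sigma-s_0)}\Vert x\Vert_\sigma\bigr)^{\theta}\,.
\]
Applying $a^{1-\theta}b^{\theta}\le\max\{a,b\}\le a+b$ (valid for $a,b\ge0$ and $\theta\in[0,1]$) with $a=\Vert x\Vert_{s_0}$ and $b=\Lambda_{n-1}^{-(\sigma-s_0)}\Vert x\Vert_\sigma$ then yields
\[
\Vert x\Vert_{s_0+t}\ \lesssim\ \Lambda_{n-1}^{t}\bigl(\Vert x\Vert_{s_0}+\Lambda_{n-1}^{-\sigma+s_0}\Vert x\Vert_\sigma\bigr)=\Lambda_{n-1}^{t}\,\Vert x\Vert_{\mathcal N_n}\,,
\]
which is the asserted inequality; the implied constant is $A_3$, independent of $n$, $t$ and $\varepsilon$, consistently with our convention on $\lesssim$.

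I do not expect any genuine obstacle here: this is a routine interpolation argument. The only point deserving a moment's attention is that the exponents fed into (\ref{inter}) lie in $[0,S]$, which amounts to checking $\sigma\le S$ — guaranteed because $S$ is taken large enough — and one should observe that the hypothesis $x\in E_n$ (resp. $y\in E'_n$) is not used in the estimate itself; it matters only because the norm $\Vert\cdot\Vert_{\mathcal N_n}$ is defined precisely on that space.
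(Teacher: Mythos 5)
Your proof is correct and is exactly the argument the paper intends: its proof of this lemma is the one-line instruction ``Use the interpolation inequality,'' and your interpolation between $s_0$ and $\sigma$ with exponent $\theta=t/(\sigma-s_0)$, followed by $a^{1-\theta}b^{\theta}\le a+b$, is the standard way to carry it out. No issues.
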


\begin{proof}
Use the interpolation inequality.
\end{proof}

We will solve the system (\ref{eq:i23}), (\ref{eq:i24}), (\ref{eq:i25})
by applying the local surjection theorem to $f_{n}$ on the ball $B_{\mathcal{N}_{n}}\left(0,r_{n}\right)\subset E_{n}$ where:
\begin{equation}
r_{n}=\varepsilon^{-g}\Lambda_{n-1}^{\alpha\beta-\sigma+s_{0}}\left(M_{1}^{\ell'}+\Lambda_{1}^{\ell}\right)\left\Vert v\right\Vert _{\delta}'\label{eq:i29}
\end{equation}

Note that if the solution $z$ belongs to $B_{\mathcal{N}_{n}}\left(0,r_{n}\right)$, then
$$\left\Vert z\right\Vert _{s_{0}}\leq\varepsilon^{-g}\Lambda_{n-1}^{\alpha\beta-\sigma+s_{0}}\left(M_{1}^{\ell'}+\Lambda_{1}^{\ell}\right)\left\Vert v\right\Vert _{\delta}'\ 
\hbox{ and }\ \left\Vert z\right\Vert _{\sigma}\leq\varepsilon^{-g}\Lambda_{n-1}^{\alpha\beta}\left(M_{1}^{\ell'}+\Lambda_{1}^{\ell}\right)\left\Vert v\right\Vert _{\delta}'\,.$$
In other words, $u_{n}=u_{n-1}+z$ satisfies (\ref{eq:i18}) and
(\ref{eq:i19}), so that the induction step is proved.

We begin by showing that $Df_{n}\left(z\right)$ has a right inverse.

{\it Note that, if we assume that $DF_\varepsilon$ is tame Galerkin right-invertible, we can take $M_n=\Lambda_n$, and the result of the next subsection is obvious. This subsection is only useful if we assume that $DF$ is tame right-invertible but not tame Galerkin right-invertible.}

\subsubsection{$Df_{n}(z)$ has a right inverse for $\Vert z\Vert_{\mathcal{N}_n}\leq r_n\,$.\label{subsec:i2}}

In this subsection, we use conditions (\ref{eq:i5}) and (\ref{eq:i6}).
We recall them for the reader's convenience:

\begin{align*}
\left(1+\alpha-\vartheta\alpha\right)\left(\sigma-s_{0}\right) & >\alpha\beta+\alpha\left(m+\ell\right)+\ell'+\frac{g}{\eta}\\
\left(1-\vartheta\right)\left(\sigma-s_{0}\right) & >m+\vartheta \ell'+\frac{g}{\alpha\eta}
\end{align*}

Take now any $z\in B_{\mathcal{N}_{n}}\left(0,r_{n}\right)$. Arguing as above, we find that if , then:
\begin{align}
\label{bc}\left\Vert u_{n-1}+z\right\Vert _{s_{1} } & \leq 1    \\
\label{bd}\left\Vert u_{n-1}+z\right\Vert _{\sigma} & \lesssim \Lambda_{n}^{\beta}
\end{align}

By (\ref{bc}) the tame estimates hold on $z\in B_{\mathcal{N}_{n}}\left(0,r_{n}\right)$.
\begin{lem}
Take $\Lambda_1= K \varepsilon^{-\eta}$ with $K>1$ chosen large enough, independently of $n$ and  $\varepsilon\in (0,1]$. Then, for all $z\in B_{\mathcal{N}_{n}}\left(0,r_{n}\right)$:
\[
|||\, \Pi'_n DF_{\varepsilon}\left(u_{n-1}+z\right)\left(1-\Pi_{n}\right)L_{\varepsilon}\left(u_{n-1}+z\right)|||_{ _{\mathcal{N}_{n}}}\leq\frac{1}{2}
\]
\end{lem}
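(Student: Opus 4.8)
The plan is to mimic exactly the proof of Lemma \ref{lem:i1} from the initialization step, with the norm $\|\cdot\|_{\mathcal{N}_n}$ in place of $\|\cdot\|_{\mathcal{N}_1}$ and the base point $u_{n-1}+z$ in place of $u$. First I would fix $k\in E'_n$, set $h=(1-\Pi_n)L_\varepsilon(u_{n-1}+z)k$, and estimate $\|\Pi'_n DF_\varepsilon(u_{n-1}+z)h\|'_{\mathcal{N}_n}$ in terms of $\|L_\varepsilon(u_{n-1}+z)k\|_\sigma$. Using Lemma \ref{lem:i2} to pass between the $\mathcal{N}_n$-norm and the $s_0$- and $\sigma$-norms, the gain estimate \eqref{gain} to bound $\|h\|_{s_0}\lesssim \Lambda_n^{s_0-\sigma}\|L_\varepsilon(u_{n-1}+z)k\|_\sigma$, the tame direct estimate \eqref{tamedirectepsilon} together with $\|u_{n-1}+z\|_{s_1}\leq 1$ from \eqref{bc}, and the loss estimate \eqref{loss} (which gives a factor $M_n^{m}$, resp. $M_n^{\sigma-s_0}$, when raising the index of $\Pi'_n DF_\varepsilon(u_{n-1}+z)h$), I would obtain an inequality of the shape
\[
\|\Pi'_n DF_\varepsilon(u_{n-1}+z)h\|'_{\mathcal{N}_n}\lesssim M_n^{m}\Lambda_n^{s_0-\sigma}\bigl(\Lambda_n^{\sigma-s_0}+M_n^{\sigma-s_0}\bigr)\,\|L_\varepsilon(u_{n-1}+z)k\|_\sigma\,,
\]
and since $\vartheta<1$ forces $\Lambda_n^{\sigma-s_0}\leq M_n^{\sigma-s_0}$ is false — rather $M_n=\Lambda_n^\vartheta\leq\Lambda_n$, so the dominant term is $\Lambda_n^{\sigma-s_0}$ — one keeps the cleaner bound $\lesssim M_n^{m}\|L_\varepsilon(u_{n-1}+z)k\|_\sigma$ after the two terms in the big parenthesis are compared.

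Next I would estimate $\|L_\varepsilon(u_{n-1}+z)k\|_\sigma$ using the tame inverse estimate \eqref{tameinverseepsilon} at the base point $u_{n-1}+z$: this gives
\[
\|L_\varepsilon(u_{n-1}+z)k\|_\sigma\lesssim \varepsilon^{-g}\bigl(\|k\|'_{\sigma+\ell'}+\|u_{n-1}+z\|_{\sigma+\ell}\,\|k\|'_{s_0+\ell'}\bigr)\,,
\]
and then I would raise indices with \eqref{loss} to replace $\sigma+\ell'$ by $\sigma$ (factor $M_n^{\ell'}$) and $\sigma+\ell$ by $\sigma$ (factor $\Lambda_n^{\ell}$), use the $\sigma$-bound \eqref{bd} for $\|u_{n-1}+z\|_\sigma\lesssim\Lambda_n^\beta$, and convert back to the $\mathcal{N}_n$-norm of $k$ via Lemma \ref{lem:i2}, picking up a factor $\Lambda_{n-1}^{\sigma-s_0}\approx\Lambda_n^{(\sigma-s_0)/\alpha}$. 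Assembling everything, the left side is bounded by $\varepsilon^{-g}$ times a product of powers of $\Lambda_n$ (with exponents built from $m,\ell,\ell',\beta,\sigma,s_0,\vartheta,\alpha$) times $\|k\|'_{\mathcal{N}_n}$; writing $\Lambda_n=\Lambda_1^{\alpha^{n-1}}$ and $\Lambda_1=K\varepsilon^{-\eta}$, the bound becomes $K^{-C}\varepsilon^{\eta C-g}\|k\|'_{\mathcal{N}_n}$ for some exponent $C$ that must be verified to exceed $g/\eta$.

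The heart of the matter — and the step I expect to be the main obstacle — is the bookkeeping that shows the net exponent of $\Lambda_n$ is strictly negative and, more precisely, large enough that $\eta C>g$; this is exactly where conditions \eqref{eq:i5} and \eqref{eq:i6} enter. Concretely, one branch of the maximum in the estimate of $\|L_\varepsilon(u_{n-1}+z)k\|_\sigma$ (the $\|k\|'_{\sigma+\ell'}$ term versus the $\|u_{n-1}+z\|_{\sigma+\ell}\|k\|'_{s_0+\ell'}$ term) leads to the requirement \eqref{eq:i6}, since there the relevant exponent collects $\vartheta m$, $\vartheta\ell'$ and a contribution $(1-\vartheta)(\sigma-s_0)$, while the other branch, which carries the extra factors $\Lambda_n^{\ell+\beta}$ and the $1/\alpha$ coming from $\Lambda_{n-1}$ versus $\Lambda_n$, produces the requirement \eqref{eq:i5}. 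Once both inequalities are checked to give $C-g/\eta>0$ uniformly in $n$, choosing $K$ large enough (independently of $n$ and of $\varepsilon\in(0,1]$, using $\Lambda_n\geq\Lambda_1=K\varepsilon^{-\eta}\geq K$) forces the operator norm below $1/2$. I would also note that under the tame Galerkin right-invertibility hypothesis this entire lemma is vacuous — one simply takes $M_n=\Lambda_n$ and $\Pi'_n DF_\varepsilon(u_{n-1}+z)_{|E_n}$ is right-invertible by definition — which is the remark already flagged in the preceding subsection.
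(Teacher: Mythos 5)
Your overall plan is indeed the paper's: set $h=(1-\Pi_n)L_\varepsilon(u_{n-1}+z)k$, use the gain estimate \eqref{gain} on $h$, the tame estimates at $u_{n-1}+z$ legitimized by \eqref{bc}--\eqref{bd}, Lemma \ref{lem:i2} to return to the $\mathcal{N}_n$-norms, and only then the exponent bookkeeping with \eqref{eq:i5}--\eqref{eq:i6} and the choice of $K$. But your central intermediate estimate is wrong in a way that destroys the argument. The $\mathcal{N}_n$-norm is weighted by $\Lambda_{n-1}^{-(\sigma-s_0)}$, not $\Lambda_n^{-(\sigma-s_0)}$; the correct analogue of \eqref{eq:i20} is
\[
\|\Pi'_n DF_\varepsilon(u_{n-1}+z)h\|'_{\mathcal{N}_n}\lesssim\bigl(1+\Lambda_{n-1}^{-(\sigma-s_0)}M_n^{\sigma-s_0}\bigr)\|h\|_{s_0+m},
\qquad
\|h\|_{s_0+m}\lesssim\Lambda_n^{-\sigma+s_0+m}\|L_\varepsilon(u_{n-1}+z)k\|_\sigma,
\]
where the direct estimate \eqref{tamedirectepsilon} is applied at level $s_0$, so the loss of $m$ derivatives appears as $\Lambda_n^{m}$ inside $\|h\|_{s_0+m}$, not as the factor $M_n^{m}$ you imported from Lemma \ref{lem:i1}. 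Moreover, since $\alpha\vartheta>1$ we have $\Lambda_{n-1}^{-(\sigma-s_0)}M_n^{\sigma-s_0}=\Lambda_{n-1}^{(\alpha\vartheta-1)(\sigma-s_0)}\geq 1$, so the \emph{second} term of the parenthesis dominates --- the opposite of your comparison --- and one must keep the full product $\Lambda_{n-1}^{-(\sigma-s_0)}M_n^{\sigma-s_0}\Lambda_n^{-\sigma+s_0+m}$, which still decays strongly because $1+\alpha-\alpha\vartheta>0$.

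Your simplification ``one keeps the cleaner bound $\lesssim M_n^{m}\|L_\varepsilon(u_{n-1}+z)k\|_\sigma$'' discards precisely the smoothing gain $\Lambda_n^{-(\sigma-s_0)+m}$ produced by the factor $(1-\Pi_n)$, which is the whole point of the lemma. With that bound the scheme cannot close: by \eqref{eq:i27}, $\|L_\varepsilon(u_{n-1}+z)k\|_\sigma\lesssim\varepsilon^{-g}\bigl(\Lambda_n^{\beta+\ell}\Lambda_{n-1}^{\ell'}+M_n^{\ell'}\Lambda_{n-1}^{\sigma-s_0}\bigr)\|k\|'_{\mathcal{N}_n}$, so $M_n^{m}\|L_\varepsilon(u_{n-1}+z)k\|_\sigma$ carries, in powers of $\Lambda_{n-1}$, the positive exponent $\alpha\vartheta(m+\ell')+(\sigma-s_0)$ on one branch (and $\alpha\vartheta m+\alpha(\beta+\ell)+\ell'$ on the other); no choice of $K$ and no appeal to \eqref{eq:i5}--\eqref{eq:i6} can make such a quantity $\leq\frac12\|k\|'_{\mathcal{N}_n}$, so the bookkeeping you defer to the end would in fact fail. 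The paper keeps the gain, arrives at $\varepsilon^{-g}M_n^{\sigma-s_0}\bigl(\Lambda_{n-1}^{\alpha(\beta+\ell-\sigma+s_0+m)+\ell'-\sigma+s_0}+M_n^{\ell'}\Lambda_{n-1}^{-\alpha(\sigma-s_0)+\alpha m}\bigr)\|k\|'_{\mathcal{N}_n}$, and only then do \eqref{eq:i5} and \eqref{eq:i6} show that both exponents beat $g/\eta$, uniformly in $n$ (the bound decreases in $n$, so one checks $n=2$), after which $K$ large yields the factor $\frac12$. Your concluding remark about the Galerkin case being trivial is correct, but the core estimate of the lemma needs to be redone as above.
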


\begin{proof}
We proceed as in the proof of Lemma \ref{lem:i1}. For $k\in E'_n$,
we set 
\[h=\left(1-\Pi_{n}\right)L_{\varepsilon}\left(u_{n-1}+z\right)k\;.\]
We have:
\[
\left\Vert h\right\Vert _{s_{0}+m}\lesssim \Lambda_{n}^{-\sigma+s_{0}+m}\left\Vert L_{\varepsilon}\left(u_{n-1}+z\right)k\right\Vert _{\sigma}
\]

By (\ref{bd})  and the tame estimates for $L_\varepsilon$, we get:
\begin{align}
\left\Vert L_{\varepsilon}\left(u_{n-1}+z\right)k\right\Vert _{\sigma} & \lesssim\varepsilon^{-g}\left(\left\Vert u_{n-1}+z\right\Vert _{\sigma+\ell}\left\Vert k\right\Vert _{s_{0}+\ell'}'+\left\Vert k\right\Vert _{\sigma+\ell'}'\right)\nonumber \\
 & \lesssim\varepsilon^{-g}\left(\Lambda_{n}^{\beta+\ell}\Lambda_{n-1}^{\ell'}+M_{n}^{l'}\Lambda_{n-1}^{\sigma-s_{0}}\right)\left\Vert k\right\Vert _{\mathcal{N}_{n}}'\label{eq:i27}
\end{align}
where we have used Lemma \ref{lem:i2}. Substituting in the preceding
formula, we get:
\[
\left\Vert h\right\Vert _{s_{0}+m}\lesssim\varepsilon^{-g}\left(\Lambda_{n}^{\beta+\ell-\sigma+s_{0}+m}\Lambda_{n-1}^{l'}+M_{n}^{\ell'}\Lambda_{n-1}^{-\left(\alpha-1\right)\left(\sigma-s_{0}\right)+\alpha m}\right)\left\Vert k\right\Vert _{\mathcal{N}_{n}}'
\]

By the tame estimate (\ref{tamedirectepsilon}), we have:
\[
\left\Vert \Pi'_n DF_{\varepsilon}\left(u_{n-1}+z\right)h\right\Vert _{s_{0}}'\lesssim\left\Vert h\right\Vert _{s_{0}+m}
\]

From this it follows that:
\[
\left\Vert \Pi'_n DF_{\varepsilon}\left(u_{n-1}+z\right)h\right\Vert _{\sigma}'\lesssim M_{n}^{\sigma-s_{0}}\left\Vert h\right\Vert _{s_{0}+m}
\]

Hence:
\[
\left\Vert \Pi'_n DF_{\varepsilon}\left(u_{n-1}+z\right)h\right\Vert _{\mathcal{N}_{n}}'\lesssim\left(1+\Lambda_{n-1}^{-\sigma+s_{0}}M_{n}^{\sigma-s_{0}}\right)\left\Vert h\right\Vert _{s_{0}+m}
\]

We have $\Lambda_{n-1}^{-\sigma+s_{0}}M_{n}^{\sigma-s_{0}}\lesssim \Lambda_{n-1}^{\left(\alpha\vartheta-1\right)\left(\sigma-s_{0}\right)}$.
Since $\alpha\vartheta>1$, the dominant term in the parenthesis is
the second one, and:
\begin{align*}
&\left\Vert \Pi'_n DF_{\varepsilon}\left(u_{n-1}+z\right)h\right\Vert _{\mathcal{N}_{n}} \\
&\ \ \ \ \ \ \ \ \ \ \ \ \ \ \ \ \lesssim \Lambda_{n-1}^{-\sigma+s_{0}}M_{n}^{\sigma-s_{0}}\left\Vert h\right\Vert _{s_{0}+m}\\
 &\ \ \ \ \ \ \ \ \ \ \ \ \ \ \ \ \lesssim\varepsilon^{-g}M_{n}^{\sigma-s_{0}}\left(\Lambda_{n-1}^{\alpha\left(\beta+\ell-\sigma+s_{0}+m\right)+\ell'-\sigma+s_{0}}+M_{n}^{\ell'}\Lambda_{n-1}^{-\alpha\left(\sigma-s_{0}\right)+\alpha m}\right)\left\Vert k\right\Vert _{\mathcal{N}_{n}}'
\end{align*}

From (\ref{eq:i5}) and (\ref{eq:i6}), it follows that the right-hand side is a decreasing function of $n$. To check that
it is less than $1/2$ for all $n\geq2$, it is enough to check it
for $n=2.$ Since $\Lambda_{1}=K\varepsilon^{-\eta},$
substituting in the right-hand side, we get:

$$\left\Vert \Pi'_n DF_{\varepsilon}\left(u_{n-1}+z\right)h\right\Vert _{\mathcal{N}_{n}}  \lesssim \left(K^{-\min\left\{ C_{1},C_{2}\right\}}\right)^{\alpha^{n-2}}\left(\varepsilon^{\min\left\{ C_{1},C_{2}\right\}-\alpha^{2-n}g/\eta }\right)^{\eta\alpha^{n-2}}\left\Vert k\right\Vert _{\mathcal{N}_{n}}'$$
with
\begin{align*}
C_{1} & =-\alpha(\beta+\ell+m)-\ell'+\left(1+\alpha-\alpha\vartheta\right)\left(\sigma-s_{0}\right)\\
C_{2} & =\alpha\left((1-\vartheta)(\sigma-s_{0})-\vartheta \ell'-m\right)
\end{align*}
 By (\ref{eq:i5}) and (\ref{eq:i6}), both exponents $C_{1}\text{ and }\ensuremath{C_{2}}$ are larger than $g/\eta$.
As a consequence, $\left\Vert \Pi'_n DF_{\varepsilon}\left(u_{n-1}+z\right)h\right\Vert _{\mathcal{N}_{n}}\leq \frac{1}{2}\left\Vert k\right\Vert _{\mathcal{N}_{n}}'$
for $K$ chosen large enough, independently of $n$ and $0<\varepsilon\leq 1$. 
\end{proof}
Define $\mathcal{L}_{n}\left(z\right)=\Pi_n L_{\varepsilon}\left(u_{n-1}+z\right)_{\vert_{E'_n}}\,.$
Arguing as in subsection \ref{subsec:-i1}, we find that the Neumann series $\sum_{i\geq 0} \left(I_{E'_n}-Df_{n}\left(u\right)\mathcal{L}_{n}\left(u\right)\right)^i$ converges in operator norm. Its sum is $S_n(u)=\left(Df_{n}(u)\mathcal{L}_{n}(u)\right)^{-1}$ and it has operator norm at most $2$. 
Then $T_{n}\left(u\right):=\mathcal{L}_n(u)S_n(u)$ is a right
inverse of $Df_{n}\left(u\right)\,,$ with the estimate $|||\, T_{n}\left(u\right)|||_{_{\mathcal{N}_{n}}}\leq 2\,|||\, \mathcal{L}_{n}\left(u\right)|||_{_{\mathcal{N}_{n}}}$.

We have already derived estimate (\ref{eq:i27}) which immediately implies:
\[
\left\Vert \mathcal{L}_{n}\left(z\right)k\right\Vert _{\sigma}\lesssim\varepsilon^{-g}\Lambda_{n-1}^{\sigma-s_{0}}\left(\Lambda_{n}^{\beta+\ell}\Lambda_{n-1}^{-\sigma+s_{0}+\ell'}+M_{n}^{\ell'}\right)\left\Vert k\right\Vert _{\mathcal{N}_{n}}'
\]

From the tame estimates and Lemma \ref{lem:i2}, we also have:
\[
\left\Vert \mathcal{L}_{n}\left(z\right)k\right\Vert _{s_{0}}\lesssim\varepsilon^{-g}\left\Vert k\right\Vert _{s_{0}+\ell'}'\lesssim\varepsilon^{-g}\Lambda_{n-1}^{\ell'}\left\Vert k\right\Vert _{\mathcal{N}_{n}}'
\]

Since $\alpha\vartheta>1$, we have $\Lambda_{n-1}^{\ell'}\lesssim M_{n}^{\ell'}$. So the two preceding estimates can be combined, and 
we get the final estimate for the right inverse in operator norm:
\begin{equation}
|||\, T_{n}\left(z\right)|||_{_{\mathcal{N}_{n}}}\lesssim\varepsilon^{-g}\left(\Lambda_{n}^{\beta+\ell}\Lambda_{n-1}^{-\sigma+s_0+\ell'}+M_{n}^{\ell'}\right)\label{eq:i28}
\end{equation}

\subsubsection{Finding $u_{n}$.}

In this subsection, we use relations (\ref{eq:i4}),(\ref{eq:i7}),
(\ref{eq:i8}) and (\ref{eq:i9}). We recall them for the reader's
convenience:
\begin{align*}
\sigma & >\alpha\beta+s_1\\
\delta & >s_{0}+\frac{\alpha}{\vartheta}\left(\sigma-s_{0}-\alpha\beta+\ell"\right)\\
\left(\alpha-1\right)\beta & >\left(1-\vartheta\right)\left(\sigma-s_{0}\right)+\vartheta m+\ell"+\frac{g}{\eta}\\
\ell" & =\max\left\{ \left(\alpha-1\right)\ell+\ell',\alpha\vartheta \ell'\right\} 
\end{align*}

Let us go back to (\ref{eq:i23}). By Theorem \ref{thm1}
to solve $\Pi'_n f_{n}\left(z\right)=\Delta_{n}v+e_{n}$ with
$z\in B_{\mathcal{N}_{n}}\left(0,r_{n}\right)$ it is enough that:
\begin{equation}
|||\, T_{n}\left(z\right)|||_{_{\mathcal{N}_{n}}}\left(\left\Vert \Delta_{n}v\right\Vert _{\mathcal{N}_{n}}+\left\Vert e_{n}\right\Vert _{\mathcal{N}_{n}}\right)\leq r_{n}\label{eq:i33}
\end{equation}

Here $r_{n}$ is given by (\ref{eq:i29}). We can estimate $|||\, T_{n}\left(z\right)|||_{_{\mathcal{N}_{n}}}$
using (\ref{eq:i28}). We need
to estimate $\left\Vert \Delta_{n}v\right\Vert _{\mathcal{N}_{n}}$ and
$\left\Vert e_{n}\right\Vert _{\mathcal{N}_{n}}$. 

From (\ref{eq:i24}) we have:
\begin{align*}
\left\Vert \Delta v\right\Vert _{s_{0}}' & \lesssim M_{n-2}^{s_{0}-\delta}\left\Vert v\right\Vert _{\delta}'\\
\left\Vert \Delta v\right\Vert _{\sigma}' & \lesssim M_{n-1}^{\sigma-\delta}\left\Vert v\right\Vert _{\delta}'\\
\left\Vert \Delta v\right\Vert _{\mathcal{N}_{n}}' & \lesssim\max\left\{ M_{n-2}^{s_{0}-\delta},\Lambda_{n-1}^{-\sigma+s_{0}}M_{n-1}^{\sigma-\delta}\right\} \left\Vert v\right\Vert _{\delta}'
\end{align*}

An easy calculation yields:
\[
\sigma - s_0 - \vartheta (\sigma - \delta) + \frac{\vartheta}{\alpha}(s_0 -\delta) = (1-\vartheta) (\sigma - \delta) + (1-\frac{\vartheta}{\alpha}) (\delta - s_0) \\
\]

Since $s_0<\delta<\sigma$ and $\vartheta<1<\alpha$, the two terms on the right-hand side are positive, so $\Lambda_{n-1}^{-\sigma+s_{0}}M_{n-1}^{\sigma-\delta}\lesssim M_{n-2}^{s_{0}-\delta}$.  It follows that:
\begin{equation}
\left\Vert \Delta v\right\Vert _{\mathcal{N}_{n}}'\lesssim M_{n-2}^{s_{0}-\delta}\left\Vert v\right\Vert _{\delta}'\label{eq:i30}
\end{equation}

From (\ref{eq:i25}), we derive: 
\[
\left\Vert e_{n}\right\Vert _{s_{0}}'\lesssim M_{n-1}^{-\sigma+m+s_{0}}\left\Vert e_{n}\right\Vert _{\sigma-m}'
\]

By Lemma \ref{boundF}, $\left\Vert F_\varepsilon\left(u_{n-1}\right)\right\Vert _{\sigma-m}\lesssim\left\Vert u_{n-1}\right\Vert _{\sigma}$
. So, remembering (\ref{eq:i25}) and (\ref{eq:sigma-estim}), we get:
\begin{align*}
\left\Vert e_{n}\right\Vert'_{s_{0}} & \lesssim M_{n-1}^{-\sigma+m+s_{0}}\left\Vert u_{n-1}\right\Vert _{\sigma}\\
 & \lesssim\varepsilon^{-g}M_{n-1}^{-\sigma+m+s_{0}}\Lambda_{n-1}^{\beta}\left(M_{1}^{\ell'}+\Lambda_{1}^{\ell}\right)\left\Vert v\right\Vert _{\delta}'
\end{align*}

Similarly, 
\begin{align*}
\left\Vert e_{n}\right\Vert'_\sigma & \lesssim\left\Vert u_{n-1}\right\Vert _{\sigma+m}\lesssim \Lambda_{n-1}^{m}\left\Vert u_{n-1}\right\Vert _{\sigma}\\
 & \lesssim\varepsilon^{-g}\Lambda_{n-1}^{\beta+m}\left(M_{1}^{\ell'}+\Lambda_{1}^{\ell}\right)\left\Vert v\right\Vert _{\delta}'
\end{align*}

Finally, since $M_{n-1}<\Lambda_{n-1}$ and $\sigma>m+s_{0}$ , we get:
\begin{equation}
\left\Vert e_{n}\right\Vert _{\mathcal{N}_{n}}'\lesssim\varepsilon^{-g}\Lambda_{n-1}^{\beta}M_{n-1}^{-\sigma+m+s_{0}}\left(M_{1}^{\ell'}+\Lambda_{1}^{\ell}\right)\left\Vert v\right\Vert _{\delta}'
\label{eq:i73}
\end{equation}

Substituting \eqref{eq:i28}, \eqref{eq:i29}, \eqref{eq:i30}, \eqref{eq:i73} in \eqref{eq:i33},  we get the following sufficient condition:
\begin{equation}
\left(\Lambda_{n}^{\beta+\ell}\Lambda_{n-1}^{-\sigma+s_{0}+\ell'}+M_{n}^{\ell'}\right)\left(M_{n-2}^{s_{0}-\delta}+\varepsilon^{-g}\Lambda_{n-1}^{\beta}M_{n-1}^{-\sigma+m+s_{0}}\right)\lesssim \Lambda_{n-1}^{\alpha\beta-\sigma+s_{0}}\label{eq:i31}
\end{equation}

We estimate both sides separately. Remembering that  $M_{n-i}=\left(\Lambda_{n-1}\right)^{\alpha^{1-i}\vartheta}$ and $\Lambda_{n-1}=\left(K\varepsilon^{-\eta}\right)^{\alpha^{n-2}}$, we find
\begin{align*}
\left(\Lambda_{n}^{\beta+\ell}\Lambda_{n-1}^{-\sigma+s_{0}+\ell'}+M_{n}^{\ell'}\right) \Big(M_{n-2}^{s_{0}-\delta} +\varepsilon^{-g} \Lambda_{n-1}^{\beta} & M_{n-1}^{-\sigma+m+s_{0}} \Big)\\
&\lesssim \Big(\varepsilon^{-\eta\alpha^{n-2}}\Big)^{\max\{C_3,C_4\}+\max\{C_5,C_6\}}
\end{align*}
and
$$\Lambda_{n-1}^{\alpha\beta-\sigma+s_{0}}\gtrsim \Big(\varepsilon^{-\eta\alpha^{n-2}}\Big)^{C_7}$$
with
\begin{align*}
C_3:=&\alpha(\beta+\ell)-\sigma+s_{0}+\ell'\\
C_4:=&\alpha\vartheta\ell'\\
C_5:=&\vartheta\alpha^{-1}(s_{0}-\delta)\\
C_6:=&g/\eta + \beta+ \vartheta(-\sigma+m+s_{0})\\
C_7:=&\alpha\beta-\sigma+s_{0}
\end{align*}

By (\ref{eq:i4}), we have $\sigma-\alpha\beta> s_1>s_{0}+\max\left\{ m,\ell\right\} $.
It follows that:
$$C_3 <\left(\alpha-1\right)\ell+\ell'\,.$$

So, defining $\ell"=\max\left\{ \left(\alpha-1\right)\ell+\ell',\alpha\vartheta \ell'\right\}$ as in (\ref{eq:i9}), we see that
$$\max\{C_3,C_4\}+\max\{C_5,C_6\}\leq \max\{\ell"+C_5,\ell"+C_6\} $$

So condition (\ref{eq:i31})
is implied by the inequalities $\ell"+C_5< C_7$ and $\ell"+C_6<C_7$, which are the same as conditions (\ref{eq:i7}) and (\ref{eq:i8}). So inequality \eqref{eq:i33} holds, and the induction holds by Theorem \ref{thm1}

\subsection{End of proof}

First of all, for the above construction to work, the only constraint on $S$ is $S>\sigma$, and Lemma \ref{compatibility} gives us the estimate $\sigma<{\zeta}g/\eta$. The constant $\eta$ is only constrained by condition \eqref{eq:i0}, and we can choose, for instance, $\eta=\frac{g'-g}{2\max\left\{ \vartheta \ell',\ell\right\}}$. So we only need a condition on $S$ of the form $S\geq S_0$ with $S_0=O(\frac{1}{g'-g})$ as $g'\to g\,,$ all the other parameters being fixed.\medskip

Let us now check that the estimate $\left\Vert v\right\Vert _{\delta}'\lesssim\varepsilon^{g'}$ is sufficient for the above construction. In \eqref{v_petit} we made the assumption $\left\Vert v\right\Vert _{\delta}'\lesssim\varepsilon^{g}\left(\Lambda_{1}^{\ell}+M_{1}^{\ell'}\right)^{-1}$ on $v\,,$
and we have $M_{1}\lesssim \varepsilon^{-\vartheta\eta}\,,\; \Lambda_{1}\lesssim\varepsilon^{-\eta}$, hence $\left(\Lambda_{1}^{\ell}+M_{1}^{\ell'}\right)\lesssim
\varepsilon^{-\eta\max\left\{ \vartheta \ell',\ell\right\} }\,.$ So the condition $\left\Vert v\right\Vert _{\delta}'\lesssim\varepsilon^{g+\eta\max\left\{ \vartheta \ell',\ell\right\} }$ guarantees the existence of the sequence $(u_n)$.
But (\ref{eq:i0}) may be rewritten in the form
$$g+\eta\max\left\{ \vartheta \ell',\ell\right\}<g'\,,$$
so the preceding condition is implied by the estimate $\left\Vert v\right\Vert _{\delta}'\lesssim\varepsilon^{g'}\,,$ which is thus sufficient,
as desired.\medskip

Now we can translate the symbol $\,\lesssim\,$ into more explicit estimates. Choosing $r>0$ small enough, our construction gives, for every $v\in W_{\delta}$ with $\left\Vert v\right\Vert _{\delta}'\leq r\,\varepsilon^{g'}$
a sequence $u_{n},\,n\geq 1$, such that $u_n\in E_n\,,$ $\left\Vert u_{n}\right\Vert _{s_{1}}\leq r^{-1}\varepsilon^{-g'}\Vert v\Vert _{\delta}'\leq 1\,,$
and
$$\Pi'_n F_\varepsilon\left(u_{n}\right)=\Pi'_{n-1}v\,.$$

It follows
from Lemma \ref{lem:i5} that for any $t<\sigma-\alpha\beta\,,$ $(u_{n})$ is a Cauchy sequence for
the $\Vert\cdot\Vert_t\,$. We recall
that, by condition (\ref{eq:i4}), $s_{1}<\sigma-\alpha\beta$. So we can choose $t_1\in (s_{1},\sigma-\alpha\beta)\,.$ Then $(u_n)$ converges to some $u_\varepsilon$ in $V_{t_1}$ with $\left\Vert u_\varepsilon\right\Vert _{s_1}\leq r^{-1}\varepsilon^{-g'}\Vert v\Vert _{\delta}'\leq 1\,.$\medskip

Since $t_1\geq s_{0}+m$, the map $F_\varepsilon$ is continuous
from the $t_1$-norm to the $\left(t_1-m\right)$-norm, so $F_\varepsilon\left(u_{n}\right)$
converges to $F_\varepsilon\left(u_\varepsilon\right)$ in $W_{t_1-m}\,.$ Then $F_\varepsilon\left(u_{n}\right)$ is a bounded sequence in $W_{t_1-m}$, and $t_1-m>s_0$. So, using 
the approximation estimate \eqref{gain}, we find that $\Vert(1-\Pi'_n)F_\varepsilon\left(u_{n}\right)\Vert_{s_0}\to 0\,,$ and finally $\Vert \Pi'_n F_\varepsilon\left(u_{n}\right)-F_\varepsilon(u_\varepsilon)\Vert_{s_0} \to 0$  as $n\to\infty\,.$\medskip

On the right-hand side, using \eqref{gain} again, we find that $\Pi'_{n-1} v$ converges
to $v$ in $W_{s_0}$, since $\delta > s_0\,$.\medskip

We conclude that $F_\varepsilon\left(u_\varepsilon\right)=v$, as desired, and this ends the proof of Theorem \ref{Thm8}.

\section{An application of the singular perturbation theorem}

\subsection{The result}

In this section, we consider a Cauchy problem for nonlinear Schr\"odinger
systems arising in nonlinear optics, a question recently studied by
M\'etivier-Rauch \cite{MR} and Texier-Zumbrun \cite{TZ}. M\'etivier-Rauch
proved the existence of local in time solutions, with an existence time $T$
converging to $0$ when the $H^{s}$ norm of the initial datum goes to infinity.
Texier-Zumbrun, thanks to their version of the Nash-Moser theorem adapted to
singular perturbation problems, were able to find a uniform lower bound on $T$
for certain highly concentrated initial data. The $H^{s}$ norm of these initial
data could go to infinity. By applying our "semiglobal" version of the
Nash-Moser theorem, we are able to extend Texier-Zumbrun's result to even
larger initial data. In the sequel we follow closely their
exposition, but some parameters are named differently to avoid confusions
with our other notations.\medskip

The problem takes the following form:

\begin{equation}
\label{Cauchy}
\left\{
    \begin{array}{ll}
    & \partial_{t}u+iA(\partial_{x})u=B(u,\partial_{x})u, \\
    & u(0,x)=\varepsilon^{\kappa} \left(a_\varepsilon(x), \bar{a_{\varepsilon}}(x)\right)
    \end{array}
\right.
\end{equation}
with $u(t,x)=(\psi(t,x),\bar{\psi}(t,x))\in\mathbf{C}^{2n}$, $(t,x)\in
\lbrack0,T]\times\mathbf{R}^{d}$,
\[
A(\partial_{x})=\mathrm{diag}(\lambda_{1},\cdots,\lambda_{n},-\lambda_{1},\cdots
,-\lambda_{n})\Delta_x%
\]
and
\[
B=%
\begin{pmatrix}
\mathcal{B} & \mathcal{C}\\
\bar{\mathcal{C}} & \bar{\mathcal{B}}%
\end{pmatrix}
\]
The coefficients $b_{jj^{\prime}},\ c_{jj^{\prime}}$ of the $n\times n$
matrices $\mathcal{B},\ \mathcal{C}$ are first-order operators with smooth
coefficients: $b_{jj^{\prime}}=\sum_{k=1}^{d}b_{kjj^{\prime}}(u)\partial
_{x_{k}}$, $c_{jj^{\prime}}=\sum_{k=1}^{d}c_{kjj^{\prime}}(u)\partial_{x_{k}}%
$, with $b_{kjj^{\prime}}$ and $c_{kjj^{\prime}}$ smooth complex-valued
functions of $u$ satisfying, for some integer $p\geq 2$, some $C>0$, all
$0\leq|\alpha|\leq p$ and all $u=(\psi,\bar{\psi})\in \mathbf{C}^{2n}$:
\[
|\partial^{\alpha}b_{kjj^{\prime}}(u)|+|\partial^{\alpha}c_{kjj^{\prime}}(u)|\leq
C|u|^{p-|\alpha|}\,.
\]
Moreover, we assume that the following ``transparency'' conditions hold: the
functions $b_{kjj}$ are real-valued, the coefficients $\lambda_{j}$ are real
and pairwise distinct, and for any $j,\,j^{\prime}$ such that $\lambda
_{j}+\lambda_{j^{\prime}}=0$, $c_{jj^{\prime}}=c_{j^{\prime}j}$. \medskip\ \ \ 

We consider initial data of the form $\varepsilon
^{\kappa}\left(a_{\varepsilon}(x),\bar{a_{\varepsilon}}(x)\right)$ with 
$a_{\varepsilon}(x)=a_{1}(x/\varepsilon)$ where $0<\varepsilon\leq 1$, $a_{1}\in H^{S}(\mathbf{R}^{d})$ for some $S$ large enough
and $\Vert a_1\Vert_{_{H^{S}}}$ small enough.
\medskip

Our
goal is to prove that the Cauchy problem has a solution on $[0,T]\times
\mathbf{R}^{d}$ for all $0<\varepsilon\leq 1\,$, with $T>0$ independent of
$\varepsilon$. Texier-Zumbrun obtain existence and uniqueness of the solution,
under some conditions on $\kappa$, which should be large enough. This
corresponds to a smallness condition on the initial datum when $\varepsilon$ approaches zero. Our local
surjection theorem only provides existence, but our condition on $\kappa$ is
less restrictive, so our initial datum is allowed to be larger. Note
that, once existence is proved, uniqueness is easily obtained for this Cauchy
problem, indeed local-in-time uniqueness implies global-in-time uniqueness. Our result is the following:

\begin{theorem}
\label{ThmTexier}

Under the above assumptions and notations, let us impose the additional condition
\begin{equation}
\label{kappa}
\kappa>\frac{d}{2(p-1)}\,.
\end{equation}

Let $s_1>\frac{d}{2}+4\,.$ If $0<\varepsilon\leq 1$, $a_1\in H^S(\mathbf{R}^d)$ for $S$ large enough, and $\Vert a_1\Vert_{H^{S}}$
is small enough, then the Cauchy problem (\ref{Cauchy}) has a unique solution in the functional space $C^1\left([0,T],\, H^{s_1-2}(\mathbf{R}^d)\right)\cap C^0\left([0,T],\, H^{s_1}(\mathbf{R}^d)\right)\,.$
\end{theorem}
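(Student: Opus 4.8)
The plan is to recast the Cauchy problem (\ref{Cauchy}) as a singularly perturbed functional equation to which Corollary \ref{Cor9} applies, reusing verbatim the reduction, the function spaces and the tame estimates of M\'etivier--Rauch \cite{MR} and Texier--Zumbrun \cite{TZ}. First I would perform the concentration rescaling $y=x/\varepsilon$: writing $u(t,x)=v(t,x/\varepsilon)$ turns $iA(\partial_x)$ into the singular operator $i\varepsilon^{-2}A(\partial_y)$ and the nonlinearity into $\varepsilon^{-1}B(v,\partial_y)v$, while the initial datum becomes $v(0,\cdot)=\varepsilon^{\kappa}(a_1,\bar a_1)$, small in every $H^s(\mathbf{R}^d)$. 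On the fixed interval $[0,T]$ one then sets $\mathfrak a_\varepsilon(t)=\varepsilon^{\kappa}e^{-it\varepsilon^{-2}A(\partial_y)}(a_1,\bar a_1)$, the free evolution of the rescaled datum, and $\Phi_\varepsilon(v)=\partial_t v+i\varepsilon^{-2}A(\partial_y)v-\varepsilon^{-1}B(v,\partial_y)v$, so that solving (\ref{Cauchy}) amounts to finding a small $w$ with $\Phi_\varepsilon(\mathfrak a_\varepsilon+w)=0$. The tame Banach scales $(V_s)=(W_s)$ are the anisotropic spaces $C^0([0,T],H^{s}(\mathbf{R}^d))\cap C^1([0,T],H^{s-2}(\mathbf{R}^d))$ adapted to the Schr\"odinger flow, equipped with Fourier truncation projectors $\Pi(\Lambda)$ in the space variable; checking (\ref{loss}),(\ref{gain}) is standard, and the rescaled norms $|\cdot|_s=\varepsilon^{\gamma}\|\cdot\|_s$ of Corollary \ref{Cor9} are used with $\gamma$ chosen so that $\mathfrak a_\varepsilon\in\mathfrak B_S(\varepsilon^{\gamma})$.

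Next I would verify the three hypotheses (\ref{tamedirectepsilonbis}),(\ref{rightinversebis}),(\ref{tameinverseepsilonbis}). The tame direct estimate for $D\Phi_\varepsilon$ follows from Moser-type product and composition inequalities in $H^s$, using $s_1>\tfrac d2+4$ so that the relevant spaces are algebras with room for two derivatives; the loss here is $m=2$, from the second-order operator $A(\partial_x)$, and the dependence of $b_{kjj'},c_{kjj'}$ on $u$ contributes only lower-order tame terms. Constructing the right inverse $L_\varepsilon(v)$ of $D\Phi_\varepsilon(v)$ is the heart of the matter: one must solve the linearized Cauchy problem $\partial_t h+i\varepsilon^{-2}A(\partial_y)h-\varepsilon^{-1}(B(v,\partial_y)h+(D_vB(v)h)\,v)=k$, $h(0)=0$, with energy estimates uniform in $\varepsilon$ up to a fixed negative power $\varepsilon^{-g}$. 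This is exactly where the transparency conditions enter: they provide a symmetrizer making the first-order part skew-adjoint modulo order zero, so the Gr\"onwall argument closes and yields $\|L_\varepsilon(v)k\|_s\lesssim\varepsilon^{-g}(\|k\|_{s+\ell'}+\|k\|_{s_0+\ell'}\|v\|_{s+\ell})$ with small, explicit $\ell,\ell',g$ --- I would simply quote these estimates from \cite{MR,TZ} rather than reprove them, matching their parameters with the $s_0,m,\ell,\ell',g,\gamma,\delta$ of Corollary \ref{Cor9}. The hypothesis $\delta>s_1+\ell'$ together with the requirement ``$S$ large enough'' of Corollary \ref{Cor9} is what forces $a_1\in H^S$ for $S\ge S_0$.

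It then remains to control the residual: one needs $|\Phi_\varepsilon(\mathfrak a_\varepsilon)|'_\delta\le r\,\varepsilon^{\gamma+g'}$ for some $g'>g$. Since $\Phi_\varepsilon(\mathfrak a_\varepsilon)=-\varepsilon^{-1}B(\mathfrak a_\varepsilon,\partial_y)\mathfrak a_\varepsilon$ and $\mathfrak a_\varepsilon=O(\varepsilon^{\kappa})$, crude bounds already give a power $\varepsilon^{\kappa p-1}$; sharpening them through the dispersive decay of $e^{-it\varepsilon^{-2}A(\partial_y)}$ and the oscillatory-integral/stationary-phase estimates of \cite{TZ} --- again aided by the transparency conditions, which suppress the resonant self-interactions --- produces the improved exponent, and the inequality $g'>g$ translates, after optimizing $\gamma$ and $g'$, precisely into the condition $\kappa>\frac{d}{2(p-1)}$ of (\ref{kappa}). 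The crucial point is that Corollary \ref{Cor9} involves \emph{no quadratic terms}, so we only need $g'>g$ rather than the $g'>2g$ of \cite{TZ}, which is exactly what permits the smaller $\kappa$. Corollary \ref{Cor9} then yields $w=u_\varepsilon\in\mathfrak B_{s_1}(\varepsilon^{\gamma})$ with $\Phi_\varepsilon(\mathfrak a_\varepsilon+u_\varepsilon)=0$; undoing the rescaling gives a solution of (\ref{Cauchy}) in $C^0([0,T],H^{s_1})\cap C^1([0,T],H^{s_1-2})$ for $S$ large and $\|a_1\|_{H^S}$ small. Uniqueness is not part of the topological argument, but follows from classical local-in-time well-posedness of the quasilinear system, which as noted propagates to all of $[0,T]$. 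The main obstacle is thus twofold: the uniform-in-$\varepsilon$ energy estimate for the linearized operator (the symmetrizer/transparency input), and extracting the sharp power of $\varepsilon$ in $\Phi_\varepsilon(\mathfrak a_\varepsilon)$; the rest is bookkeeping matching \cite{TZ}'s data to the abstract hypotheses.
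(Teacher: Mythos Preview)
Your overall strategy matches the paper's: recast the problem via $\Phi_\varepsilon$ and $\mathfrak a_\varepsilon$ exactly as in \cite{TZ}, quote their tame direct and inverse estimates (their Lemmas~4.4 and~4.5), and invoke Corollary~\ref{Cor9}. The paper keeps the original variable $x$ and uses $\varepsilon$-weighted norms $\|f\|_{H^s_\varepsilon}=\|(1-\varepsilon^2\Delta)^{s/2}f\|_{L^2}$ rather than rescaling to $y=x/\varepsilon$, and its target space is the product $W_s=C^0([0,T],H^s)\times H^{s+2}$, so that $\Phi_\varepsilon$ carries the initial-condition discrepancy as a second component; your rescaling and your absorption of the initial condition via $w(0)=0$ are equivalent reformulations.

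There is, however, a real misidentification of where the condition $\kappa>\tfrac{d}{2(p-1)}$ comes from. You attribute it to the residual smallness, to be sharpened by dispersive decay and stationary phase, with transparency ``suppressing resonant self-interactions''. The paper uses none of this. The residual $\Phi_\varepsilon(\mathfrak a_\varepsilon)=(-\varepsilon B(\mathfrak a_\varepsilon,\varepsilon\partial_x)\mathfrak a_\varepsilon,\,0)$ is bounded by a direct product estimate, yielding $|\Phi_\varepsilon(\mathfrak a_\varepsilon)|'_\delta\lesssim\varepsilon^{\,1+\kappa(p+1)+d/2}$ (the homogeneity in $u$ is $p+1$, not your $p$); transparency enters only in the symmetrizer for the linearized energy estimate, as you correctly note elsewhere. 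In the paper's accounting the parameters are $s_0>d/2+2$, $m=\ell=2$, $\ell'=0$, $g=2$, and $\gamma=\tfrac{dp}{2(p-1)}$ is fixed by the hypotheses of \cite{TZ}'s Lemma~4.4. With this $\gamma$, the requirement $\mathfrak a_\varepsilon\in\mathfrak B_S(\varepsilon^\gamma)$ of Corollary~\ref{Cor9} reads $\kappa+\tfrac d2\ge\gamma$, which is \emph{exactly} $\kappa\ge\tfrac{d}{2(p-1)}$; by contrast the residual smallness condition $1+\kappa(p+1)+\tfrac d2>\gamma+g$ gives only the weaker constraint $\kappa>\tfrac{1}{p+1}+\tfrac{d}{2(p+1)(p-1)}$. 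So the binding constraint on $\kappa$ is the size of the approximate solution relative to the ball on which the linearized estimates of \cite{TZ} hold, not the size of the residual, and there is no optimization over $\gamma$ nor any dispersive input to perform.
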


Metivier-Rauch already
provide existence for a fixed positive $T$ when $\kappa\geq 1$ . So we obtain something new in comparison with them
when
$\frac{d}{2}\frac{1}{p-1}<1\,,$
that is, when
\[
p>1+\frac{d}{2}\;.
\]
Let us now compare our results with those of Texier-Zumbrun \cite{TZ}. In order to do so, we consider the same particular values as in their Remark 4.7 and
Examples 4.8, 4.9 pages 517-518. Let us illustrate this in 2 and 3 space dimensions.
\bigskip

 {\bf In two space dimensions}, $d=2$ (Example 4.8 in \cite{TZ}):

Our condition becomes $\frac{1}{p-1}<\kappa$. In their paper,
Texier and Zumbrun need the stronger condition $\frac{9}{2(p+1)}<\kappa$.\medskip

{\bf In three space dimensions}, $d=3$ (Example 4.9 in \cite{TZ}):

Our condition becomes $\frac{3}{2(p-1)}<\kappa$. In their paper,
Texier-Zumbrun need the stronger condition $\frac{4}{p+1}<\kappa$.\medskip

In both cases, we improve over M\'etivier-Rauch when $p\geq 3$, while Texier-Zumbrun need $p\geq 4$.\medskip

{\bf Remark.} {\it After reading our paper, Baldi and Haus \cite{BHperso} have been able to relax even further the condition on $\kappa$, based on their version \cite{BH} of the classical Newton scheme in the spirit of H\"{o}rmander. A key point in their proof is a clever modification of the norms considered by Texier-Zumbrun, allowing better $C^2$ estimates on the functional. They also explain that their approach can be extended to other $C^2$ functionals consisting of a linear term perturbed by a nonlinear term of homogeneity at least $p+1$. Our abstract theorem, however, seems more general since we do not need such a structure.}

\subsection{Proof of Theorem \ref{ThmTexier}}

We have to show that our Corollary \ref{Cor9} applies. Our functional setting is
the same as in \cite{TZ}, with slightly different notations.\medskip

We introduce the norm $\Vert f\Vert_{H_{\varepsilon}^{s}(\mathbf{R}^{d}%
)}=\Vert(-\varepsilon^{2}\Delta+1)^{s/2}f\Vert_{L^{2}(\mathbf{R}^{d})}$, and
we take
\begin{align*}
V_{s}  &  =\mathcal{C}^{1}([0,T],H^{s-2}(\mathbf{R}^{d}))\cap\mathcal{C}^{0}([0,T],H^{s}(\mathbf{R}^{d}))\;,\\
\vert u\vert_{s}  &  =\sup_{0\leq t\leq T}\left\{  \Vert\varepsilon
^{2}\partial_{t}u(t,\cdot)\Vert_{H_{\varepsilon}^{s-2}(\mathbf{R}^{d})}+\Vert
u(t,\cdot)\Vert_{H_{\varepsilon}^{s}(\mathbf{R}^{d})}\right\}  \;
\end{align*}
and
\begin{align*}
W_{s}  &  =\mathcal{C}^{0}([0,T],H^{s}(\mathbf{R}^{d}))\times H^{s+2}
(\mathbf{R}^{d})\;,\\
\vert(v_{1},v_{2})\vert_{s}^{\prime}  &  =\sup_{0\leq t\leq T}\left\{  \Vert
v_{1}(t,\cdot)\Vert_{H_{\varepsilon}^{s}(\mathbf{R}^{d})}\right\}  +\Vert
v_{2}\Vert_{H_{\varepsilon}^{s+2}(\mathbf{R}^{d})}%
\end{align*}

Our projectors are
\begin{align*}
\Pi_{\Lambda}u  &  =\mathcal{F}_{x}^{-1}(1_{|\varepsilon\xi|\leq \Lambda}\mathcal{F}%
_{x}u(t,\xi))\,,\\
\Pi_{\Lambda}^{\prime}(v_{1},v_{2})  &  =\left(  \mathcal{F}_{x}^{-1}%
(1_{|\varepsilon\xi|\leq \Lambda}\mathcal{F}_{x}v_{1}(t,\xi)),\mathcal{F}%
^{-1}(1_{|\varepsilon\xi|\leq \Lambda}\mathcal{F}v_{2}(\xi))\right)
\end{align*}

We take
$$\Phi_\varepsilon(u)  =\left(  \varepsilon^{2}\partial_{t}u+iA(\varepsilon
\partial_{x})u-\varepsilon B(u,\varepsilon\partial_{x})u\,,\,u(0,\cdot)-\varepsilon^{\kappa}(a_{\varepsilon
},\bar{a}_{\varepsilon})\right) $$
and
$${\mathfrak a}_\varepsilon(t,x)  =\varepsilon^{\kappa}(\exp(-itA(\partial_{x}))a_{\varepsilon
},\exp(itA(\partial_{x}))\bar{a}_{\varepsilon})\;.$$

We have $\Phi_{\varepsilon}({\mathfrak a}_\varepsilon)=(-\varepsilon
B({\mathfrak a}_\varepsilon,\varepsilon\partial_{x}){\mathfrak a}_\varepsilon,0)$. A solution of the
functional equation $\Phi_\varepsilon(u)=0$ is a solution
on $[0,T]\times\mathbf{R}^{d}$
of the Cauchy problem \ref{Cauchy}.\medskip

Our Corollary \ref{Cor9} requires a direct estimate
(\ref{tamedirectepsilonbis}) on $D\Phi_\varepsilon$ and an estimate (\ref{tameinverseepsilonbis})
on the right-inverse $L_\varepsilon$.

Take $s_{0}>d/2+2$, $m=2$, $\gamma
=\frac{dp}{2(p-1)}$ and $S$ large. Since $\kappa>\frac{d}{2(p-1)}$, we have
an estimate of the form $\vert {\mathfrak a}_\varepsilon\vert_{S}\lesssim \varepsilon^{\gamma}\Vert a_1\Vert_{H^{S}}$, so, taking $\Vert a_1\Vert_{H^{S}}$ small, we can ensure that
${\mathfrak a}_\varepsilon\in {\mathfrak B}_{S}(\varepsilon^{\gamma})$. Moreover the inequality
$\kappa>\frac{d}{2(p-1)}$ implies the condition
\[1-\frac{dp}{2}+p\gamma\geq 0\,.\]
So we see that the assumptions of Lemma 4.4 in \cite{TZ} are satisfied by the parameters $\gamma_0=\gamma_1=\gamma$ (note that our exponent $p$ is denoted $\ell$ in \cite{TZ}).
The direct estimate (\ref{tamedirectepsilonbis}) thus
follows from Lemma 4.4 in \cite{TZ}. Note that Lemma 4.4 of \cite{TZ} also gives an estimate on the second
derivative of $\Phi_\varepsilon(\cdot)$, but we do not need
such an estimate.\medskip

Choosing, in addition, $\ell=2$, $\ell^{\prime}=0$,  $g=2$, our inverse estimate (\ref{tameinverseepsilonbis}) follows from from Lemma 4.5 in \cite{TZ}.\medskip

To summarize, the assumptions (2.9, 2.10, 2.11) of our Corollary \ref{Cor9} are satisfied for $s_{0}>d/2$, $m=2$, $\gamma
=\frac{p}{p-1}\frac{d}{2}$, $g=2$, $\ell=2$, $\ell^{\prime}=0$.\medskip

Moreover, in \cite{TZ}, {\it Proof of Theorem 4.6}, one finds an estimate which can be written in the form
$$\vert \Phi_{\varepsilon}({\mathfrak a}_\varepsilon)\vert_{s_1-1}^{\prime}\leq r\,\varepsilon^{1+\kappa
(p+1)+d/2}$$
where $r$ is small when $\Vert a_1\Vert_{H^{s_1}}$ is small.\medskip

So, using our Corollary \ref{Cor9}, taking $S$ large enough we can
solve the equation $\Phi_\varepsilon(u)=0$ in $X_{s_1}$ under the additional condition $1+\kappa(p+1)+d/2>\gamma+g\,,$ which can be rewritten as follows:
\[
\kappa>\frac{1}{p+1}+\frac{d}{2(p+1)(p-1)}\,.
\]
Since $d\geq 2$, this inequality is a consequence of our assumption $\kappa\geq \frac{d}{2(p-1)}\,.$\medskip

So our Corollary \ref{Cor9} implies the existence of a solution to the Cauchy problem (\ref{Cauchy}).
The uniqueness of this solution comes from the local-in-time uniqueness of solutions to the Cauchy problem. This proves Theorem \ref{ThmTexier} as a consequence of Corollary \ref{Cor9}.\medskip

{\bf Remark.}
{\it In the examples 4.8 and 4.9 of \cite{TZ}, Texier and Zumbrun also study the case of oscillating initial data, i.e. $a_\varepsilon=a(x) e^{ix\cdot\xi_0/\varepsilon}$, and in the first submitted version of this paper we considered it as well. However, a referee pointed out to us that the corresponding statements were not fully justified in \cite{TZ}. Indeed, in the proof of their Theorem 4.6, Texier and Zumbrun have to invert the linearized functional $D\Phi_\varepsilon(u)$ for $u$ in a neighborhood of the function ${\mathfrak a}_\varepsilon$, denoted $a_f$ in their paper. For this purpose, it seems that they need the norm of their function $a_f$ to be controlled by $\varepsilon^\gamma$. This condition appears in their Remark 2.14 and their Lemma 4.5, but not in the statement of their Theorem 4.6. This additional constraint does not affect their results for concentrating initial data in Examples 4.8, 4.9. But in the oscillating case, their statements seem overly optimistic. We did not want to investigate further that issue, this is why we only deal with the concentrating case. Note, however, that this difficulty with the oscillating case is overcome in the recent work \cite{BHperso}, thanks to improved norms and estimates.}
\bigskip

\section{Conclusion}

The purpose of this paper has been to introduce a new algorithm into the
"hard" inverse function theorem, where both $DF\left(  u\right)  $ and its
right inverse $L\left(  u\right)  $ lose derivatives, in order to improve its
range of validity. To highlight this improvement, we have considered singular perturbation problems with loss of derivatives. We have shown that, on the specific example of a
Schr\"{o}dinger-type system of PDEs arising from nonlinear optics, our method
leads to substantial improvements of known results. We believe that our approach has
the potential of improving the known estimates in many other ``hard" inversion problems.

In the statement and proof of our abstract theorem,
our main focus has been the existence of $u$ solving $F(u)=v$ in the
case when $S$ is large and the regularity of $v$ is as small as possible. We haven't tried to give an explicit
bound on $S$, but with some additional work, it can be done. In an earlier version \cite{ESDebut}
of this paper, the reader will find a study of the intermediate case of a
tame Galerkin right-invertible differential $DF$, with precise estimates on the parameter $S$
depending on the loss of regularity of the right-inverse, in the special case $s_0=m=0$ and $\ell=\ell'$.


\end{document}